 \theoremstyle{plain}
\newtheorem{theo}{Theorem}[subsection]
\newtheorem{pr}[theo]{Proposition}
 \newtheorem{lem}[theo]{Lemma}
\theoremstyle{remark}
\newtheorem{rema}[theo]{Remark}
\theoremstyle{definition}
\newtheorem{defi}[theo]{Definition}
\newtheorem*{notat}{Notation}
 \newcommand\lan{\langle}
\newcommand\ra{\rangle}
\newcommand\bl{\bigl(} \newcommand\br{\bigl)}
\newcommand\ob{^{-1}}
\newcommand\dmge{DM^{eff}_{gm}{}}
\newcommand\dmgm{DM_{gm}}
\newcommand\obj{Obj}
\newcommand\mo{Mor}
\newcommand\id{\operatorname{id}}
\newcommand\cu{\underline{C}}
\newcommand\du{{\underline{D}}}
\newcommand\eu{{\underline{E}}}
\newcommand\au{\underline{A}}
\newcommand\hw{{\underline{Hw}}}
\newcommand\z{{\mathbb{Z}}}
\newcommand\q{{\mathbb{Q}}}
\newcommand\p{\mathbb{P}}
\newcommand\al{\alpha}
\newcommand\be{\beta}
\newcommand\ns{\{0\}}
\DeclareMathOperator\prli{\varprojlim}
\DeclareMathOperator\inli{\varinjlim}
\newcommand\chow{Chow}
\newcommand\chowe{Chow^{eff}}
\newcommand\spe{\operatorname{Spec}\,}
\DeclareMathOperator\imm{\operatorname{Im}}
\DeclareMathOperator\co{\operatorname{Cone}}
\DeclareMathOperator\cha{\operatorname{char}}
\newcommand\dms{DM(S)}
\newcommand\dmcs{DM_c(S)}
\newcommand\dmk{DM(K)}
\newcommand\dmlas{DM_{\Lambda}(S)}
\newcommand\dmqs{DM_{\q}(S)}
\newcommand\dmcqs{DM_{c,\q}(S)}
\newcommand\dmq{DM_{\q}}
\newcommand\dmx{DM(X)}
\newcommand\dmcx{DM_c(X)}
\newcommand\dm{DM}
\newcommand\dmc{DM_c}
\newcommand\dmy{DM(Y)}
\newcommand\hwchows{{\underline{Hw}}_{\chow(S)}}
\newcommand\wchow{{w_{Chow}}}
\newcommand\wchows{{w_{Chow}(S)}}
\newcommand\sss{{\mathcal{S}}}
\begin{document}

%\dedicate{To my wife Olga}
\title{
On weights for relative motives with integral coefficients} %over regular characteristic $0$ schemes}
 \author{Mikhail V. Bondarko
   \thanks{ %%!!!!
 The work is supported by RFBR
(grants no. 12-01-33057 and 14-01-00393) and by the Saint-Petersburg State University research grant no. 6.38.75.2011.} }
  \maketitle
  
   %\href{mailto:mbond@gmail.com}{mbond@gmail.com}
%\address{St. Petersburg State University, the Faculty of Mathematics and Mechanics, Universitetsky prospekt, 28, 198504, Peterhof, St. Petersburg, Russia}
% \classification{4C15 18E30 19E15 14C30 14K05}
 %\keywords{weight structures, motives, triangulated categories, perverse sheaves, t-structures} 

\begin{abstract} 
The goal of this paper is to define a certain {\it Chow weight structure} $\wchow$ for the category $\dms$ of %(constructible) 
Voevodsky's motivic complexes with integral coefficients (as described by Cisinski and Deglise) over any %regular
 excellent finite-dimensional separated  
 scheme $S$. Our results are parallel to (though substantially weaker than) the corresponding 'rational coefficient' statements proved by D. Hebert and the author. 
 
 As an immediate consequence of the existence of $\wchow$, we obtain certain (Chow)-weight spectral sequences and filtrations for any (co)homology of $S$-motives.
\end{abstract}  
\tableofcontents

 \section*{Introduction}
The goal of  this paper is to prove 
  that the {\it Chow weight structure} $\wchow$ (as introduced in \cite{bws} and in \cite{bzp} for  Voevodsky's motives over a perfect field $k$) can also be defined (somehow) for the category $\dms$ of motives with integral coefficients over any excellent  separated finite dimensional base scheme $S$. %of characteristic $0$

As was shown in \cite{bws}, the existence of $\wchow$ yields several nice consequences.
In particular, there exist    {\it Chow-weight} spectral sequences and  filtrations %, and {\it virtual $t$-truncations}
 for any (co)homological functor $H:\dms\to \au$.
%So, the Chow weight structure yields certain weights for any (co)homology of motives.  
 
 %Now we mention (other) papers on relative motives that are related with the current one.

Now we list the contents of the paper. More details can be found at the beginnings of sections.

 In \S\ref{sprem} we recall some basic properties of countable homotopy colimits in triangulated categories,  Voevodsky's motives, and weight structures. Most of the statements of the section are contained in \cite{neebook}, \cite{degcis}, and \cite{bws}; yet we also prove some new results.

 In \S\ref{swchow} we define $\wchow$ and study its properties. In particular, we study  the 'functoriality' of $\wchow$ (with respect to the functors of the type $f^*,f_*,f^!,f_!$, for $f$ being a quasi-projective morphism of schemes).
 More 'nice' properties of $\wchows$ can be verified 
 for $S$ being a regular scheme over a field. 
 We also describe the relation of $\wchow(S)$ with its 'rational' analogue for $\dmqs$.

 The author is deeply grateful to prof. F. Deglise for his very helpful explanations.

\begin{notat}

 For a category $C,\ A,B\in\obj C$, we denote by
$C(A,B)$ the set of  $C$-morphisms from  $A$ into $B$.

For categories $C,D$ we write $C\subset D$ if $C$ is a full %strict
subcategory of $D$.

For a category $C,\ X,Y\in\obj C$, we say that $X$ is a {\it
retract} of $Y$ if $\id_X$ can be factorized through $Y$
(if $C$ is triangulated or abelian, then $X$ is a  retract of $Y$
whenever $X$ is its direct summand).
The Karoubization of $B$ is
the category of 'formal images' of idempotents in $B$
(so $B$ is embedded into an idempotent complete category).

 For an additive $D\subset C$ the subcategory $D$ is called
{\it Karoubi-closed}
  in $C$ if it
contains all retracts  of its objects in $C$. The full subcategory of $C$ whose objects are all retracts of objects of $D$ (in $C$) will be called the {\it Karoubi-closure} of $D$ in $C$.

%??!!
$M\in \obj C$ will be called compact if the functor $C(M,-)$
commutes with all small coproducts that exist in $C$. In this paper (in contrast with \cite{bws}) we will only consider compact objects in those categories that are closed with respect to arbitrary small coproducts. % (this is consistent with  the tradition, and not consistent with the convention used in the previous papers).

$\cu$ below will always denote some triangulated category; usually it will
be endowed with a weight structure $w$ (see Definition \ref{dwstr}
below).

We will use the term 'exact functor' for a functor of
triangulated categories (i.e.  for a functor that preserves the
structures of triangulated categories). 
We will call a covariant (resp. contravariant)
additive functor $H:\cu\to \au$ for an abelian $\au$
 {\it homological} (resp. {\it cohomological}) if
it converts distinguished triangles into long exact sequences.

For $f\in\cu (X,Y)$, $X,Y\in\obj\cu$, we will call the third vertex
of (any) distinguished triangle $X\stackrel{f}{\to}Y\to Z$ a cone of
$f$; recall that distinct choices of cones are connected by
(non-unique) isomorphisms.
We will often specify a distinguished triangle by two of its
morphisms.

 For a set of
objects $C_i\in\obj\cu$, $i\in I$, we will denote by $\lan C_i\ra$
the smallest strictly full triangulated subcategory containing all $C_i$; for
$D\subset \cu$ we will write $\lan D\ra$ instead of $\lan \obj
D\ra$. We will call the  Karoubi-closure of $\lan C_i\ra$ in $\cu$ the {\it triangulated category  generated by $C_i$}.

For $X,Y\in \obj \cu$ we will write $X\perp Y$ if $\cu(X,Y)=\ns$.
For $D,E\subset \obj \cu$ we will write $D\perp E$ if $X\perp Y$
 for all $X\in D,\ Y\in E$.
For $D\subset \cu$ we will denote by $D^\perp$ the class
$$\{Y\in \obj \cu:\ X\perp Y\ \forall X\in D\}.$$
Sometimes we will denote by $D^\perp$ the corresponding
 full subcategory of $\cu$. Dually, ${}^\perp{}D$ is the class
$\{Y\in \obj \cu:\ Y\perp X\ \forall X\in D\}$.

%We will say that  $C_i$ generate $\cu$ if $\cu$ equals the Karoubization $\lan C_i\ra$.
We will say that some $C_i$, $i\in I$, {\it weakly generate} $\cu$ if for
$X\in\obj\cu$ we have: $\cu(C_i[j],X)=\ns\ \forall i\in I,\
j\in\z\implies X=0$ (i.e. if $\{C_i[j]\}^\perp$ contains only
 zero objects).

Below by default all schemes will be
excellent separated %regular $\spe \q$-ones??!! %
%noetherian
of finite Krull dimension. We will call a morphism $X\to S$ quasi-projective if it factorizes through an embedding of 
$X$ into $\p^N(S)$ (for a large enough $N$).
%;  morphisms of schemes by default will be  of finite type. 
%??! $\spe \q$-ones??!! Often our schemes will be {\it reasonable}; see Definition \ref{dreas} below.
We will call a scheme (satisfying these conditions) {\it pro-smooth} if it can be presented as the limit 
 of a filtering projective system of smooth %quasi-projective %??!!
 varieties over a field $K$ such that the connecting morphisms are smooth, %???
  affine, and dominant.
  Note that here we can assume $K$ to be perfect, since for any field its spectrum is the limit of a 'smooth affine dominant' projective system 
  of smooth %quasi-projective %??!!
 varieties over the corresponding prime field. % (and we can assume that 
  %(certainly, the perfectness restriction does not change anything here)
Besides, any pro-smooth scheme is regular since %the limit of
 a filtered inductive limit of regular rings is regular if it is Noetherian.

We will sometimes need certain stratifications of a (regular) scheme $S$. Recall that  a %(ordered) 
stratification %????!! define inductively??!!
 $\al$  is a presentation of  $S$ as $\cup S_l^\al$
 where $S_l^\al$, $l\in L$ ($L$ is a finite set), are pairwise disjoint locally closed subschemes of $S$. Omitting $\al$, we will denote by
$j_l:S_l^\al\to S$  the corresponding immersions.
 We do not demand the closure of each $S_l^\al$ to be  the union of strata; %(though we could do this); 
 we will only assume that
 $S$ 'can be glued from $S_l^\al$ step by step'. This means: there exists a %connected %??!!
 rooted
 binary tree $T^{\al}$ whose %bottom vertices
 leaf nodes are exactly $S_l^\al$, any parent node  $X$ of $T^{\al}$ has exactly two child nodes, %$Y$ and $Z$
 such that 
 %in 
 the union of the leaf node %corresponding to
 descendants for the left child node of $X$ is open in the
  union of all the leaf node
 descendants of $X$ (see \url{http://en.wikipedia.org/wiki/Binary_tree} for the corresponding definitions).
 We will say that a stratification $\al$ is the union of stratifications $\beta$ and $\gamma$ if the roots of the trees for 
 $\beta$ and $\gamma$ are the child nodes for the root for $\al$.

 Below we will only consider  stratifications that we call {\it very regular}; this means that there exists a tree $T^\al$ as above such that  for any  node  $X$ of $T^{\al}$ the
  union of the leaf node
 descendants of $X$ (this includes $X$  if $X$ is a leaf node itself) is regular.
 
Below we will identify a Zariski point (of a scheme $S$) with the spectrum of its residue field.

We will call a morphism $f:Y\to H$ quasi-projective if there exists a quasi-compact immersion $Y\to \p^N(X)$. %??!!

 \end{notat}

\section{Preliminaries: relative motives and weight structures}\label{sprem}

%In this section we recall some basics on motives and weight structures. %, and resolution of singularities.

In \S\ref{scolim} we recall the theory of countable (filtered)
homotopy colimits and study certain related notions.

In \S\ref{sbrmot}  we recall some of basic properties of motives over $S$ (as considered in \cite{degcis}; we also deduce certain results that were not stated in ibid. explicitly).

In \S\ref{sws} we recall some basics of the theory of weight structures (as developed in \cite{bws}); we also prove some new lemmas on the subject.

\subsection{Countable homotopy colimits in triangulated categories and big extension-closures
 %the construction and properties
 }\label{scolim}

 We recall the basics of the theory  of countable (filtered)
homotopy colimits in triangulated categories (as introduced in \cite{bockne}; some more details can be found in \cite{neebook} and in \S4.2 of \cite{bws}). %is fairly standard, cf. Definition 1.6.4 of
We will only apply the results of this paragraph to triangulated categories closed
with respect to arbitrary small coproducts; so we will not mention this restriction below (though this does not really decrease the generality of our results).

\begin{defi}\label{dcoulim}%[]

Suppose that we have a sequence of objects $Y_i$ (starting from some
$j\in\z$)  and maps $\phi_i:Y_{i}\to Y_{i+1}$.  %Let there exist $D=\coprod Y_i$  in $\cu$. 
We consider the map $d:\oplus
\id_{Y_i}\bigoplus \oplus (-\phi_i): D\to D$ (we can define it since
its $i$-th component 
 can be easily factorized  as the composition
$Y_i\to Y_i\bigoplus Y_{i+1}\to D$).
 Denote  a cone of $d$ by $Y$. We will write $Y=\inli Y_i$ and
 call $Y$ the {\it homotopy colimit} of $Y_i$; we will not consider any other (homotopy) colimits in this paper.

 %We will say that the colimit exists (in $\cu$) if the coproduct $D$ exists.
\end{defi}

\begin{rema}\label{rcoulim}

1. Note that these homotopy colimits are not really canonical and functorial in $Y_i$ since the choice of a cone is not canonical. They are only defined up to non-canonical isomorphisms.

2. By Lemma 1.7.1 of \cite{neebook} the homotopy colimit of
$Y_{i_j}$ is the same for any subsequence of $Y_i$. % (i.e. all possible choices of the corresponding cones are isomorphic). 
In particular,
we can discard any (finite) number of first terms in $Y_i$.

3. By Lemma 1.6.6 of \cite{neebook} the homotopy colimit of
$X\stackrel{\id_X}{\to}X\stackrel{\id_X}{\to}
X\stackrel{\id_X}{\to} X\stackrel{\id_X}{\to}\dots$ is $X$. Hence
we obtain that $\inli X_i\cong X$ if for $i\gg 0$ all $\phi_i$ are
isomorphisms and $X_i\cong X$.

\end{rema}

We also recall the behaviour of colimits  under (co)representable
functors.

\begin{lem}\label{coulim} %@@!!
1.  For any $C\in\obj\cu$ we have a natural surjection $
\cu(Y,C)\to \prli \cu(Y_i,C)$.

2. This map is bijective if all $\phi_i[1]^*:
\cu(Y_{i+1}[1],C)\to \cu(Y_i[1],C)$  are surjective for all $i\gg
0$.

3. If $C$ is compact then $\cu(C,Y)= \inli \cu(C,Y_i)$.
\end{lem}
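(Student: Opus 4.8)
The plan is to work directly from Definition \ref{dcoulim}, which presents $Y$ as a cone of the map $d\colon D\to D$ where $D=\bigoplus_i Y_i$ and $d=\bigoplus\id_{Y_i}-\bigoplus\phi_i$. Thus there is a distinguished triangle $D\stackrel{d}{\to}D\to Y\to D[1]$. Applying the cohomological functor $\cu(-,C)$ to this triangle yields a long exact sequence
\[
\cu(D[1],C)\stackrel{d[1]^*}{\to}\cu(D[1],C)\to\cu(Y,C)\to\cu(D,C)\stackrel{d^*}{\to}\cu(D,C),
\]
and, since $\cu(-,C)$ sends coproducts to products, $\cu(D,C)=\prod_i\cu(Y_i,C)$. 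One computes that $d^*$ on $\prod_i\cu(Y_i,C)$ is the map $(g_i)\mapsto(g_i-\phi_i^*g_{i+1})$, whose kernel is exactly the set of compatible families, i.e. $\prli\cu(Y_i,C)$. Hence the cokernel of $d[1]^*$ surjects onto $\cu(Y,C)$, whose image in $\prod_i\cu(Y_i,C)$ is precisely $\prli\cu(Y_i,C)$; composing gives the natural surjection $\cu(Y,C)\to\prli\cu(Y_i,C)$ of part 1. For part 2, the same long exact sequence shows this composite is injective as soon as $d[1]^*\colon\prod_i\cu(Y_i[1],C)\to\prod_i\cu(Y_i[1],C)$ is surjective; and a standard "Mittag-Leffler"/telescoping argument shows that $d[1]^*=(h_i)\mapsto(h_i-\phi_i[1]^*h_{i+1})$ is surjective precisely when all the transition maps $\phi_i[1]^*$ are surjective for $i\gg 0$ (one solves for $h_i$ inductively downward, using Remark \ref{rcoulim}.2 to discard the first finitely many terms where surjectivity may fail).

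For part 3, I would apply the homological functor $\cu(C,-)$ to the same triangle $D\stackrel{d}{\to}D\to Y\to D[1]$, obtaining the long exact sequence
\[
\cu(C,D)\stackrel{d_*}{\to}\cu(C,D)\to\cu(C,Y)\to\cu(C,D[1])\stackrel{d[1]_*}{\to}\cu(C,D[1]).
\]
Since $C$ is compact and $\cu$ has arbitrary small coproducts, $\cu(C,D)=\bigoplus_i\cu(C,Y_i)$ and likewise after the shift. The map $d_*$ becomes the usual "shift minus identity" map $\bigoplus_i\cu(C,Y_i)\to\bigoplus_i\cu(C,Y_i)$ associated to the direct system $(\cu(C,Y_i),\phi_{i*})$, which is well known to be injective with cokernel the ordinary colimit $\inli\cu(C,Y_i)$ — the same computation that identifies $\inli$ of abelian groups as a cokernel of such a map. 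Injectivity of $d_*$ (equivalently, of $d[1]_*$) kills the connecting term, so $\cu(C,Y)\cong\cok(d_*)=\inli\cu(C,Y_i)$, and one checks the identification is the natural one.

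The only genuinely delicate point is the Mittag-Leffler argument in part 2: verifying that surjectivity of $d[1]^*$ on the product is equivalent to eventual surjectivity of the individual transition maps $\phi_i[1]^*$, and correctly handling the finitely many initial indices via Remark \ref{rcoulim}.2. Everything else is a routine application of the long exact sequence attached to the defining triangle of a homotopy colimit together with the (co)product behaviour of representable functors — the latter being the very definition of compactness in part 3. I would also remark that all three claims, together with their proofs, are already in \S1.6–1.7 of \cite{neebook} and \S4.2 of \cite{bws}, so it suffices to record the statements; but carrying out the above makes the paper self-contained.
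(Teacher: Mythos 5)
Your proof is correct and is exactly the standard Bökstedt--Neeman argument that the paper itself relies on: the paper states Lemma \ref{coulim} without proof, deferring to \S1.6--1.7 of \cite{neebook} and \S4.2 of \cite{bws}, and your long-exact-sequence analysis of the defining triangle $D\stackrel{d}{\to}D\to Y$ (with $\cu(D,C)=\prod_i\cu(Y_i,C)$ and, for compact $C$, $\cu(C,D)=\bigoplus_i\cu(C,Y_i)$) is precisely that argument. One minor caveat: surjectivity of $d[1]^*$ on the product is \emph{implied by}, but not in general equivalent to, eventual surjectivity of the $\phi_i[1]^*$ (the latter is the Mittag--Leffler-type sufficient condition, proved by lifting $h_{i+1}$ from $h_i$ step by step), and only this implication is needed for part 2, so the ``precisely when'' should be weakened but does not affect the proof.
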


Below we will also need a new (simple) piece of homological algebra: the  definition of a {\it strongly extension-closed} class of objects, and some properties of this notion.

%, and a statement that mentions it.

\begin{defi}\label{dses}

1. $D\subset \obj \cu$ will be
called {\it extension-closed}
    if it contains $0$ and for any distinguished triangle $A\to B\to C$
in $\cu$ we have: $A,C\in D\implies
B\in D$ (hence it is also {\it strict}, i.e. contains all objects of $C$ isomorphic to its elements). We will call the smallest extension-closed subclass of
 objects of $\cu$ that  contains a class $C'\subset \obj\cu$    the 
extension-closure of $C'$.

 2. $C\subset \obj \cu$ will be called {\it strongly extension-closed} if it %is strict (i.e. closed with respect to isomorphisms), 
 contains $0$, and if for any $\phi_i:Y_{i}\to Y_{i+1}$ such that $Y_0\in C$, $\co(\phi_i)\in C$ for all $i\ge 0$, we have $\inli_{i\ge 0} Y_i\in C$ (i.e. $C$ contains all possible cones of the corresponding distinguished triangle; note that those are isomorphic).

3. The smallest strongly extension-closed Karoubi-closed class of objects of $\cu$ that  contains a class $C'\subset \obj\cu$ and is closed with respect to arbitrary (small) coproducts will be called the 
%strongly extension-closed envelope of
big extension-closure of $C'$.

\end{defi}

Now we verify certain properties of these notions. %; below we will %only apply 
%mostly need assertion \ref{isesperp}. %??!!

\begin{lem}\label{lbes}
\begin{enumerate}

\item\label{iseses}
 Let $C$ be strongly extension-closed. Then it is also extension-closed. % and strict (i.e. closed with respect to isomorphisms).

\item\label{isesperp}
Suppose that for $C',D\subset \obj \cu$, %all objects of $D$ are compact, 
we have %compact??!! 
$ C'\perp D$. Then for the %strongly extension-closed envelope
 big extension-closure $C$ of $C'$ we also have $C\perp D$.

\item\label{isescperp}
Suppose that for $C',D\subset \obj \cu$, all objects of $D$ are compact, we have %compact??!! 
$D\perp C'$. Then $D$ is also orthogonal to the %strongly extension-closed envelope
big extension-closure  $C$ of $C'$. % we have $D\perp C$.

%+ dual; limits??!!

\item\label{isefun} For an exact functor $F:\cu \to \du$, any $C'\subset \obj\cu$, and its extension-closure $C$ we have: %the Karoubization of 
the class $F(C)$ is contained in the
 extension-closure  of $F(C')$ in $\du$.

\item\label{isesfun} Suppose that an exact functor $F:\cu \to \du$ of triangulated categories commutes with %countable
arbitrary coproducts. Then for any $C'\subset \obj\cu$ and its big extension-closure $C$ %the Karoubization of 
the class $F(C)$ is contained in the
big extension-closure  of $F(C')$ in $\du$.
%strongly extension-closed classes of objects into classes that are strongly extension-closed up to isomorphisms of objects.

\end{enumerate}
\end{lem}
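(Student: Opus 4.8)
The common strategy for all six assertions is a ``test class'' argument: to show that the (big) extension-closure $C$ of $C'$ is contained in a given class $E$, it suffices to verify that $E$ contains $C'$ and possesses every closure property appearing in the relevant definition --- extension-closedness for \ref{isefun}, and strong extension-closedness together with Karoubi-closedness and stability under arbitrary small coproducts for \ref{isesperp}, \ref{isescperp}, \ref{isesfun} --- since $C$ is by definition the \emph{smallest} such class. Assertion \ref{iseses} is the degenerate case: given a distinguished triangle $A\to B\to D\to A[1]$ with $A,D\in C$, apply the strong extension-closedness of $C$ to the sequence $\phi_0\colon A\to B$, $\phi_i=\id_B$ for $i\ge1$, whose cones are $D,0,0,\dots\in C$; then $\inli Y_i\in C$, and $\inli Y_i\cong B$ by part (3) of Remark \ref{rcoulim}.

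For \ref{isesperp} I would fix $Y\in D$ and take $E={}^\perp\{Y\}=\{X\in\obj\cu:\ \cu(X,Y)=\ns\}$. It contains $0$ and, by hypothesis, $C'$; Karoubi-closedness and coproduct-stability are immediate (using $\cu(\coprod X_i,Y)=\prod\cu(X_i,Y)$). For strong extension-closedness, suppose $\phi_i\colon Y_i\to Y_{i+1}$ with $Y_0\in E$ and $\co(\phi_i)\in E$ for all $i\ge0$; applying the cohomological functor $\cu(-,Y)$ to the triangles $Y_i\to Y_{i+1}\to\co(\phi_i)$ gives inductively $\cu(Y_i,Y)=\ns$ for all $i$, while applying it to the shifted triangles $Y_i[1]\to Y_{i+1}[1]\to\co(\phi_i)[1]$ together with $\cu(\co(\phi_i),Y)=\ns$ shows that each $\phi_i[1]^*\colon\cu(Y_{i+1}[1],Y)\to\cu(Y_i[1],Y)$ is surjective. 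Hence part (2) of Lemma \ref{coulim} yields $\cu(\inli Y_i,Y)\cong\prli\cu(Y_i,Y)=\ns$, i.e.\ $\inli Y_i\in E$; so the big extension-closure $C$ of $C'$ lies in ${}^\perp\{Y\}$ for every $Y\in D$, which is $C\perp D$. Assertion \ref{isescperp} is dual: fix a compact $X\in D$ and take $E=\{X\}^\perp$; compactness gives $\cu(X,\coprod Z_i)=\coprod\cu(X,Z_i)$ (coproduct-stability of $E$) and $\cu(X,\inli Y_i)=\inli\cu(X,Y_i)$ by part (3) of Lemma \ref{coulim}, so if $Y_0\in E$ and $\co(\phi_i)\in E$ for all $i$, then $\cu(X,Y_i)=\ns$ for all $i$ (induction via $\cu(X,-)$), whence $\cu(X,\inli Y_i)=\inli\cu(X,Y_i)=\ns$.

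For the functoriality statements I would use the ``inverse image'' test class $C''=\{X\in\obj\cu:\ F(X)\in E\}$, with $E$ the extension-closure of $F(C')$ in $\du$ for \ref{isefun}, and the big extension-closure of $F(C')$ for \ref{isesfun}. Since $F$ is exact it takes a distinguished triangle with two outer vertices in $C''$ to one with two outer vertices in $E$, so $C''$ is extension-closed and contains $C'$; for \ref{isefun} this already gives $C''\supseteq C$, i.e.\ $F(C)\subseteq E$. For \ref{isesfun} one notes in addition that $F$ preserves retracts (so $C''$ is Karoubi-closed, $E$ being so) and, commuting with small coproducts and being exact, commutes up to non-canonical isomorphism with the cones and homotopy colimits of Definition \ref{dcoulim} (these are built out of coproducts and cones); hence $C''$ is also coproduct-stable and strongly extension-closed, so $C''\supseteq C$ and $F(C)\subseteq E$.

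The only genuinely delicate point is the verification of strong extension-closedness in \ref{isesperp}: the natural surjection of part (1) of Lemma \ref{coulim} by itself only yields $\cu(\inli Y_i,Y)\twoheadrightarrow\prli\cu(Y_i,Y)$, which carries no information, so one must actually check the surjectivity hypothesis of part (2) of Lemma \ref{coulim} for the \emph{shifted} transition maps --- and it is precisely there that the hypothesis $\co(\phi_i)\in E$ (as opposed to the weaker $\cu(Y_i,Y)=\ns$) is used. Everything else reduces to routine long-exact-sequence chases.
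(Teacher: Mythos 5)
Your proof is correct and follows essentially the same route as the paper: parts (1)--(3) use exactly the same homotopy-colimit trick ($X\to Y\to Y\to\dots$ with cones $Z,0,0,\dots$) and the same long-exact-sequence chases feeding into Lemma \ref{coulim}(2) and (3), while for (4)--(5) you spell out via ``test classes'' what the paper dismisses as obvious from the definitions. No gaps.
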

\begin{proof}

\ref{iseses}. It suffices to note for any  distinguished triangle $X\to Y\to Z$ the object $Y$ is the colimit of $X\stackrel{f}{\to} Y\stackrel{\id_Y}{\to} Y \stackrel{\id_Y}{\to} Y \stackrel{\id_Y}{\to} Y\to \dots$; the cone of $f$ is $Z$, whereas the cone of $\id_Y$ is $0$.

\ref{isesperp}. Since for any $d\in D$ the functor $\cu(-,d)$ converts arbitrary coproducts into products,
%Obviously, 
it suffices to verify  (for any $d\in D$): if for $Y_i$ as in
 Definition \ref{dses}(2) we have $Y_0\perp d$, $\co(\phi_i)\perp d$ for all $i\ge 0$, then $\inli Y_i\perp d$.
 Now, for any $i\ge 0$ we have a long exact sequence
 %\begin{equation}\label{} \end{equation}
 $$\dots  \to \cu(Y_{i+1}[1],d) \to \cu(Y_i[1],d) \to \cu(\co(\phi_i),d) (=\ns) \to \cu(Y_{i+1},d) \to \cu(Y_i,d)\to \dots  $$
 Hence $\cu(Y_{i+1}[1],d)$ surjects onto $  \cu(Y_i[1],d)$, whereas the obvious induction yields that $\cu(Y_j,d)=\ns$ for any $j\ge 0$. Then Lemma \ref{coulim}(2) yields  $\cu(\inli Y_i,d)\cong \prli \cu(Y_i,d)=\ns$. % a separate statement above??!!

\ref{isescperp}. Note that for any $d\in D$ the functor $\cu(d,-)$ converts arbitrary coproducts into coproducts. So, similarly to the reasoning above, it suffices to  fix some $d\in D$ and prove that $d\perp \inli_{i\ge 0}Y_i$ if  
$d\perp Y_0$ and $d\perp \co(\phi_i)$ for all $i\ge 0$. The half-exact sequence $\cu(d,Y_i)\to \cu(d,Y_{i+1})\to \cu(d, \co(\phi_i))$ yields that $d\perp Y_j$ for all $j\ge 0$. It remains to apply Lemma \ref{coulim}(3).% (note that $d$ is compact).

\ref{isefun}, \ref{isesfun}. Obvious from the definitions given above.

\end{proof}

\begin{rema}\label{rperp}
Assertion \ref{isescperp} of the Lemma immediately implies the following simple fact: for any class $D$ of compact objects of $\cu$ the class $D^{\perp}$ is strongly extension-closed, Karoubi-closed, and also closed with respect to arbitrary coproducts.
Indeed, it suffices to note that $D$ is orthogonal to the big extension-closure of $D^{\perp}$. 
\end{rema}

\subsection{On relative Voevodsky's  motives with integral coefficients %\zop??!!
(after Cisinski and Deglise)}\label{sbrmot}

We list some   properties of the triangulated categories  $DM(-)$ of relative motives
(as considered by Cisinski and Deglise).

\begin{pr}\label{pcisdeg}

Let $X,Y$ be  (excellent separated finite dimensional) %noetherian) % regular $\spe\q$-) % reduced) %??
schemes; $f:X\to Y$ is a  
finite type morphism. 
\begin{enumerate}

\item\label{imotcat} For any  $X$ a tensor triangulated  category $\dmx$ with the unit object $\z_X$ is defined; it is closed with respect to arbitrary small coproducts.

$\dmx$ is the category of  {\it Voevodsky's  motivic complexes} over $X$, as described (and thoroughly studied) in %\S14--15 
\S11 of \cite{degcis}.

\item\label{iadd} $\dm(X\sqcup Y)=\dm(X)\bigoplus \dm(Y)$; $\dm(\emptyset)=\ns$.

\item\label{imotgen}
The (full) subcategory $\dmcx\subset \dmx$ of compact objects is tensor triangulated, and  $\q_X\in \obj \dmcs$. $\dmcx$ weakly generates $\dmx$.

\begin{comment}
\item\label{ivoemot} %nado?!! Only if we 'glue from points'??!!
If $S$ is the spectrum of a perfect field, $\dmcs$ is isomorphic to the category $\dmgm=\dmgm(S)$ of Voevodsky's geometric motives %??!!(with integral coefficients)
 over $S$ (see \cite{1}). Besides, if $\cha S=0$ then $\dmgm=\lan\chow\ra$ (here we consider the full embedding $\chow\to \dmgm$ that is a natural extension of the embedding $\chowe\to \dmge$ given by ibid.). Here $\chow$ can be defined as the Karoubization of the class $p_*\z_P(s)[2s]$, $p:P\to S$ is smooth projective, $s\in \z$, in $\dms$; for smooth projective $p:P\to S$ and $p':P'\to S$ we have $p_*\z_P\perp p'_*\z_{P'}(s)[2s+r]$ for any $s\in \z$, $r\ge 0$.  

%\item\label{iidcompl} All $\dmx$ and $\dmcx$ are idempotent complete. %nado??!!
\end{comment}

\item\label{imotfun}  For any  $f$ of finite type
the following functors
 are defined:
$f^*: \dm(Y) \leftrightarrows \dmx:f_*$ and $f_!: \dmx \leftrightarrows \dmy:f^!$; $f^*$ is left adjoint to $f_*$ and $f_!$ is left adjoint to $f^!$.

We call these the {\bf motivic image functors}.
Any of them (when $f$ varies) yields a  $2$-functor from the category of %(noetherian 
(separated finite-dimensional excellent) schemes
with  morphisms of finite type to the $2$-category of triangulated categories.
%; they also commute with arbitrary (small) coproducts. %Reference??!!
 %{big} @Why??? Theorem 14.1.30; Theorem 14.1.26 this is also true for $

Besides,  the functors $f^*$ and $f_!$
 preserve compact objects (i.e. they could be restricted to the subcategories $\dmc(-)$) and arbitrary (small) coproducts in $\dm(-)$.

\item\label{iexch} %p.31, 35!!
For a Cartesian square
of finite type 
morphisms
$$\begin{CD}
Y'@>{f'}>>X'\\
@VV{g'}V@VV{g}V \\
Y@>{f}>>X
\end{CD}$$
we have $g^*f_!\cong f'_!g'{}^*$ and $g'_*f'{}^!\cong f^!g_*$.
if $g$ is smooth or if $f$ is smooth projective.

\item\label{itate}  For any $X$ there exists a
Tate object $\z(1)\in\obj\dmcx$; tensoring by it yields  exact Tate twist functors $-(1)$ on $\dmcx\subset \dmx$.
Both of these  functors are auto-equivalences; we will denote the corresponding inverse functors by $-(-1)$.

 Tate twists
commute with all motivic image functors mentioned (up to an isomorphism of functors) and with arbitrary (small) direct sums. %why??!

Besides, for $X=\p^1(Y)$ there is a functorial isomorphism $f_!(\q_{\z^1(Y)})\cong \z_Y\bigoplus \z_Y(-1)[-2]$. %Sign??

\item\label{icompgen} $\dmcx$ is the triangulated subcategory of $\dm(X)$ generated by all $u_!\z_U(n)$ for all %smooth quasi-projective $U$ 
$u$ being the compositions of  finite \'etale morphisms with  open embeddings and  smooth projective morphisms, $n\in \z$. 
%NOOO!!!

\item\label{iupstar}  $f^*$ is symmetric monoidal; $f^*(\z_Y)=\z_X$.

\item \label{ipur}

$f_*\cong f_!$ if $f$ is proper;
$f^!(-)\cong f^*(-)(s)[2s]$ 
 if $f$ is smooth 
 quasi-projective %follows from the 'sp-version' since we can compose with open embeddings??!!
 (everywhere) of relative dimension $s$. %, $M\in \obj \dmy$.

%If $f$ is an open immersion, we just have $f^!=f^*$.??!!

\item\label{iglup}
Let $i:Z\to X$  be a closed immersion, $U=X\setminus Z$; $j:U\to X$ is the complementary open immersion. Then %the following statements are valid. (i) The 
the compositions $i_*j^*$, $i^*j_!$, and $i^!j_*$ are zero, whereas the adjunction transformation $j^*j_*\to 1_{\dm(U)}$ is an isomorphisms of
      functors.

%(ii) $i$ is a full embedding. 

\item\label{iglu}

In addition to the assumptions of the previous assertion let $Z,X$ be regular.
Then
the motivic image functors yield {\it gluing data} for $\dm(-)$  (in the sense of \S1.4.3 of \cite{bbd}; see also Definition 8.2.1 of \cite{bws}). That means that %(in addition to the adjunctions given by assertion \ref{imotfun}) 
(in addition to the statements given by the previous assertions) the following statements are also valid.

(i)  $i_*$ is a full embeddings; $j^*=j^!$ is isomorphic to the
localization (functor) of $\dmx$ by
$i_*(\dm(Z))$. Hence the adjunction transformations $i^*i_*\to 1_{\dm(Z)}\to
      i^!i_!$ and $1_{\dm(U)}\to j^!j_!$ %to {iglup}??!!
       are isomorphisms of
      functors.

(ii) For any $M\in \obj \dmx$ the pairs of morphisms 
\begin{equation}\label{eglu}
j_!j^!(M)(=j_!j^*(M)) \to M
\to i_*i^*(M)(\cong i_!i^*M)\end{equation}
 and $i_!i^!(M) \to M \to j_*j^*(M)$ can be completed to
distinguished triangles (here the connecting
morphisms come from the adjunctions of assertion \ref{imotfun}).

%\item \label{itr}??!! Take another category??!! If $f$ is a finite universal homeomorphism, $f^*$ is an equivalence of categories. 

\item\label{icont}
Let $S$ be a scheme which is the limit of an essentially affine (filtering) projective system of  schemes $S_\be$ (for $\be\in B$) such that the connecting morphisms are dominant. Then $\dmcs$ is isomorphic to the $2$-colimit %??!!
 of the categories $\dmc(S_\be)$. For these isomorphisms all the connecting functors are given by the corresponding motivic inverse image functors; see the next assertion  and  Remark \ref{ridmot} below.

\item\label{icontp} In the setting of the previous assertion for some $\be_0\in B$ we denote the corresponding morphisms $S\to S_{\be_0}$ and $S_{\be}\to S_{\be_0}$ (when the latter is defined) by $p_{\be_0}$ and $p_{\be,\be_0}$, respectively. Let $M\in \dmc(S_{\be_0})$, $N\in \dm(S_{\be_0})$. 

Then $\dms(p_{\be_0}^*M, p_{\be_0}^*N)=\inli_{\be} \dm(S_{\be})(p_{\be,\be_0}^*M, p_{\be,\be_0}^*N)$.

\end{enumerate}

\end{pr}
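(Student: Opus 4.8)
The plan is to reduce the claim to the known continuity statement of assertion \ref{icont} (the $2$-colimit description of the categories $\dmc(S_\be)$) by a compactness argument. The key point is that, although $M$ is compact, $N$ need not be; so I cannot directly apply the isomorphism of categories from \ref{icont} to the pair $(M,N)$. Instead I would first treat the case where $N$ is compact, and then pass to the general case by writing an arbitrary $N\in\dm(S_{\be_0})$ as a homotopy colimit (in the sense of Definition \ref{dcoulim}) of compact objects and using the compactness of $p_{\be_0}^*M$ together with Lemma \ref{coulim}(3).

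First I would handle the compact case. If $N\in\dmc(S_{\be_0})$, then by assertion \ref{icont} the category $\dmcs$ is the $2$-colimit of the $\dmc(S_\be)$ along the functors $p_{\be,\be_0}^*$ (and $p_{\be_0}^*$ into the colimit), all of which preserve compactness by assertion \ref{imotfun}. By the standard description of Hom-sets in a $2$-colimit (filtered colimit) of categories, $\dms(p_{\be_0}^*M,p_{\be_0}^*N)=\inli_{\be}\dm(S_\be)(p_{\be,\be_0}^*M,p_{\be,\be_0}^*N)$, which is exactly the asserted formula in this subcase. Here I should be slightly careful that the index category is the one of $\be$ mapping to $\be_0$ (i.e.\ the comma category $B/\be_0$), which is still filtering, so the colimit makes sense; and that the transition maps in the colimit are induced precisely by the $p_{\be',\be}^*$, matching the claim.

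Next, for general $N$, I would use that $\dmc(S_{\be_0})$ weakly generates $\dm(S_{\be_0})$ and is closed under coproducts' images: concretely, present $N$ as a homotopy colimit $N=\inli_i N_i$ with each $N_i\in\dmc(S_{\be_0})$ and each cone $\co(N_i\to N_{i+1})$ also compact (this is possible by the standard argument building a cellular tower out of a compact weak generating family, cf.\ the theory recalled in \S\ref{scolim}). Since $p_{\be_0}^*$ is exact and commutes with coproducts (assertion \ref{imotfun}), it commutes with homotopy colimits, so $p_{\be_0}^*N=\inli_i p_{\be_0}^*N_i$; likewise $p_{\be,\be_0}^*N=\inli_i p_{\be,\be_0}^*N_i$ in each $\dm(S_\be)$. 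Because $p_{\be_0}^*M$ and $p_{\be,\be_0}^*M$ are compact (again \ref{imotfun}), Lemma \ref{coulim}(3) gives
$$\dms(p_{\be_0}^*M,p_{\be_0}^*N)=\inli_i\dms(p_{\be_0}^*M,p_{\be_0}^*N_i),\qquad \dm(S_\be)(p_{\be,\be_0}^*M,p_{\be,\be_0}^*N)=\inli_i\dm(S_\be)(p_{\be,\be_0}^*M,p_{\be,\be_0}^*N_i).$$
Now apply the compact case to each $N_i$ and interchange the two filtered colimits (over $i$ and over $\be$), which is legitimate since filtered colimits of abelian groups commute with filtered colimits; this yields the desired identity for $N$.

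The main obstacle I anticipate is the general (non-compact) case, specifically the justification that $N$ can be written as such a homotopy colimit of compacts with compact cones, and the verification that $p_{\be_0}^*$ genuinely commutes with this particular homotopy colimit construction — this rests on $p_{\be_0}^*$ preserving both cones and arbitrary coproducts, which is assertion \ref{imotfun}, but the bookkeeping (choice of the tower, compatibility of the towers over varying $\be$) needs care. A secondary point to check is that the comma category $B/\be_0$ indexing the colimit is filtering and cofinal in the relevant sense, so that the colimit formula inherited from \ref{icont} is literally the one asserted here. Everything else is a routine manipulation of filtered colimits and adjunctions.
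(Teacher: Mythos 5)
Your route is genuinely different from the paper's: the paper does not argue for part (\ref{icontp}) at all, but simply quotes it as formula (4.3.4.1) of \cite{degcis}. Your handling of the case of compact $N$ is fine --- it is essentially the full-faithfulness half of the equivalence in assertion (\ref{icont}), read off via the description of Hom-sets in a filtered $2$-colimit of categories, together with the fact that the functors $p_{\be,\be_0}^*$ preserve compactness.

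The genuine gap is in your reduction of general $N$ to compact $N$. You claim that any $N\in \obj \dm(S_{\be_0})$ can be presented as a countable homotopy colimit $\inli N_i$ (in the sense of Definition \ref{dcoulim}) with all $N_i$ and all cones $\co(N_i\to N_{i+1})$ compact. This is false: the cellular-tower construction you invoke (the one used, for instance, in the proof of Proposition \ref{pnews}) produces towers whose terms are arbitrary coproducts of compact objects, and such coproducts are not compact. Indeed, any $\inli N_i$ with all $N_i$ compact sits by definition in a distinguished triangle whose other two vertices are countable coproducts of compacts, so already an uncountable coproduct of nonzero compact objects is not of this form. The reduction can be repaired, but by a different (standard) mechanism: both sides of the asserted formula are homological functors of $N$ commuting with arbitrary coproducts --- the left-hand side because $p_{\be_0}^*$ commutes with coproducts and $p_{\be_0}^*M$ is compact (assertion (\ref{imotfun}) and Remark \ref{ridmot}), the right-hand side additionally because filtered colimits of abelian groups are exact and commute with coproducts --- and there is a natural comparison map between them (induced by applying $p_\be^*$ and $2$-functoriality) which you have shown to be an isomorphism on compact $N$. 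Since the compact objects weakly generate $\dm(S_{\be_0})$ by assertion (\ref{imotgen}), the class of $N$ on which this comparison map is an isomorphism is a localizing subcategory containing a set of compact generators, hence all of $\dm(S_{\be_0})$. With that substitution the remaining bookkeeping in your argument (cofinality of the index category over $\be_0$, interchange of filtered colimits) is unproblematic.
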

\begin{proof}
Almost all of these properties of %Beilinson
Voevodsky's motivic complexes
are stated in (part B of) the Introduction of ibid.; the proofs are mostly contained in %\S1, \S2, \S14, and \S15 
\S11 and \S10 of ibid. In particular, see Theorem 11.4.5 of loc. cit.; note that a quasi-projective $f$ mentioned in assertion \ref{ipur} can be presented as the composition of an open embedding with a smooth projective morphism. The functors preserving compact motives are treated in \S4.2 of ibid. Our assertions \ref{iglup} and \ref{iglu} were proved in  \S2.3 of ibid.
Assertion \ref{icontp} is given by %Proposition 4.3.4 
formula (4.3.4.1) of ibid.
$f^*$ and $f_!$ respect coproducts since they admit right adjoints.

So, we will only prove those assertions that are not stated in ibid. (explicitly).

%For (\ref{iidcompl}):  Since $\dmx$  is closed with respect to arbitrary small coproducts, it is idempotent complete by Proposition 1.6.8 of \cite{neebook}. Since a retract of a compact object is compact also, $\dmcx$ is also idempotent complete.

Assertion \ref{icompgen} is an immediate consequence of Theorem 11.1.13 of \cite{degcis}. One only should unwind the definitions involved. %??!!(see Remark \ref{ridmot} below)
In loc. cit. the generators were given by Tate twists of the motives $M_X(U)$ for all smooth $u$. Now,
 the ('Nisnevich') Mayer-Vietoris distinguished triangle (see \S11.1.9(1) of loc. cit.) allows us to consider only % $u:U\to S$ of the form mentioned.
generators %of the form $M_S(U)$  (in the notation of loc. cit.) for 
$U$ as above (and also connected); in this case we can replace $M_X(U)$ by a Tate twist of $u_*\z_U$.

\end{proof}

\begin{rema}\label{ridmot}

%??!!1. In \cite{degcis} for a smooth $f:X\to Y$  an object  that is (essentially) $f_!f^!(\z_Y)$ was denoted by $\mg_Y(X)$.
 %??!We will not usually need this notation below. 
 
 % an open embedding with a smooth projective morphism??!! locally true for any smooth moephism; %suffices to verify for \'etale one??!

 %??!! (yet cf. Remarks \ref{rtwchow}(1) and \ref{rintel}(4)).

% More generally, for any quasi-finite $i:Y\to X$ it seems that $i_!i^!\q_X$ is a reasonable candidate for $\mg_Y(X)$ (cf. Definition 1.3 of \cite{scholfcohom}; yet note that in loc. cit. cohomological motives are considered, causing the difference in the notation for motivic functors).

 In \cite{degcis} the functor $g^*$
  was constructed for any %(separated)
   morphism $g$ not necessarily of finite type; it commutes with arbitrary coproducts and   preserves compact objects. % (see %Proposition 14.1.5 ???!! Theorem 15.2.1(1) of ibid.). 
   Besides, for such a $g$ and any %separated 
   %??!! finite type 
   smooth projective $f:X'\to X$ we have an isomorphism $g^*f_*\cong f'_*g'{}^*$ (for the corresponding $f'$ and $g'$; cf. parts \ref{iexch} and \ref{ipur} of the proposition).

% Below the only morphisms of infinite type that we will be interested in are limits of immersions (more precisely, for a Zariski point $K$  of a scheme $S$  we will consider the natural morphism $j_K:K\to S$; cf. Notation).

 Note also: if $g$ is a pro-open immersion, then one can also define $g^!=g^*$.
 So, one can also define $j_K^!$ that commutes with arbitrary coproducts. %preserves compact objects (cf. also \S2.2.12 of \cite{bbd}). 
 The system of these functors satisfy the second assertion in part \ref{iexch} of the proposition (for a finite type %separated
  $g$).

4. In \cite{degcis} properties of motives with rational coefficients (those were called Beilinson motives) were studied in great detail (see Appendix C of ibid., it is no wonder that it is easier to deal with rational coefficients than with integral ones). 

%A nice concise exposition of the properties of Beilinson motives (that also follows \cite{degcis}) %could  can  be found in \S2 of \cite{hebpo}. %{brelmot}??!!

\end{rema}

When treating pro-smooth schemes, we will need the following statement.

\begin{lem}\label{lega43}
Let $X=\inli_{\be \in B} X_\be$,  be pro-smooth, $Y$ is a regular scheme quasi-projective over $X$. Then 
there exist a $\be_0\in B$ and a   %regular
 quasi-projective $Y_{\be_0}/X_{\be_0}$ such that:
$Y\cong Y_{\be_0}\times_{X_{\be_0}}X$ and for all $\be\ge \be_0$ the schemes $Y_\be =Y_{\be_0}\times_{X_{\be_0}}X_\be$ are regular. 

%regular (closed) subscheme of a limit: via Spivakovsky?! 

\end{lem}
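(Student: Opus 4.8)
The statement is a standard ``spreading out'' (limit) argument for the pro-smooth scheme $X=\inli_\be X_\be$, combined with a check that regularity persists. The plan is to proceed in three steps.

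\emph{Step 1: descend the scheme $Y$ to some finite level.} Since $Y$ is quasi-projective over $X$, by definition there is a quasi-compact immersion $Y\hookrightarrow \p^N(X)$. The projective system $(X_\be)$ is filtering with affine (hence quasi-compact, quasi-separated) connecting morphisms, and $X=\prli_\be X_\be$; so we are in the situation of the standard EGA~IV$_3$, \S8 descent results (limits of schemes). These give: a locally closed subscheme of $\p^N(X)$ is, for some $\be_0\in B$, the pullback of a locally closed subscheme $Y_{\be_0}\subset \p^N(X_{\be_0})$, and moreover $Y_{\be_0}$ can be taken quasi-projective over $X_{\be_0}$ (it is, by construction, a subscheme of $\p^N(X_{\be_0})$; one shrinks $\be_0$ if necessary so that the immersion is quasi-compact, which is automatic here). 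Thus $Y\cong Y_{\be_0}\times_{X_{\be_0}}X$, and for every $\be\ge\be_0$ we may set $Y_\be=Y_{\be_0}\times_{X_{\be_0}}X_\be$; these form a projective system with affine dominant connecting morphisms whose limit is $Y$ (base change commutes with the limit).

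\emph{Step 2: propagate regularity.} We know $Y$ is regular; we must find $\be_0$ (enlarging the one from Step~1) so that all $Y_\be$ with $\be\ge\be_0$ are regular. First, each $X_\be$ is a smooth variety over the perfect field $K$, hence regular, and $Y_{\be_0}$ is of finite type over $X_{\be_0}$, hence Noetherian and excellent. Regularity of an excellent Noetherian scheme is an open condition on the base of a finite-type family, and more precisely: in the affine-dominant tower $(Y_\be)$, the transition maps $Y_\be\to Y_{\be'}$ ($\be\ge\be'$) are affine, so $Y=\spe\,(\inli \Gamma)$ locally, with $\inli$ a filtered colimit of excellent Noetherian rings along flat (indeed faithfully flat, as the maps are dominant between regular schemes, cf. the remarks in the Notation section that such limits are regular if Noetherian) maps. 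One then uses the limit formula for the singular locus: since the non-regular locus of $Y_\be$ is closed and the $Y_\be$ are excellent, and since $Y$ is regular, the pullback of $\mathrm{Sing}(Y_\be)$ to $Y$ is empty for $\be$ large; by quasi-compactness of the $\mathrm{Sing}(Y_\be)$ and the limit description of $Y$, this emptiness already holds at some finite level $\be_0$, and then, because the maps $Y_\be\to Y_{\be_0}$ for $\be\ge\be_0$ are smooth (being base changes of the smooth connecting maps $X_\be\to X_{\be_0}$) and smoothness preserves regularity of the total space when the base is regular, every $Y_\be$ with $\be\ge\be_0$ is regular. (Here I use crucially that the connecting morphisms $X_\be\to X_{\be_0}$ are smooth, which is part of the definition of pro-smooth, so $Y_\be\to Y_{\be_0}$ is smooth and $Y_{\be_0}$ regular $\implies Y_\be$ regular.)

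\emph{Step 3: assemble.} Taking $\be_0$ to be an index large enough for both Step~1 and Step~2 (possible since $B$ is filtering), we obtain $Y_{\be_0}/X_{\be_0}$ quasi-projective with $Y\cong Y_{\be_0}\times_{X_{\be_0}}X$ and $Y_\be=Y_{\be_0}\times_{X_{\be_0}}X_\be$ regular for all $\be\ge\be_0$, as required.

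\emph{Main obstacle.} The genuinely nontrivial point is Step~2: transferring regularity of the limit $Y$ down to a finite level. The descent of the scheme structure (Step~1) is completely standard EGA machinery; but regularity is not a priori an ``$\aleph_0$-type'' condition one can test at finite level without input. What makes it work is (a) excellence of the finite-level schemes $Y_\be$ (so that $\mathrm{Sing}(Y_\be)$ is closed), giving a \emph{constructible} locus whose emptiness can be detected in the limit, and (b) smoothness of the connecting morphisms, which upgrades ``$Y$ regular'' to ``$Y_{\be_0}$ regular'' and then propagates regularity upward to all $\be\ge\be_0$. One should be slightly careful that $Y_{\be_0}$ need not be smooth over $K$ (only $X_{\be_0}$ is), which is why the argument routes through excellence and the limit formula for the non-regular locus rather than through generic smoothness.
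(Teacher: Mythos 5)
Your overall strategy coincides with the paper's: spread $Y$ out to a quasi-projective $Y_{\be_1}/X_{\be_1}$ using the limit results of \cite{ega43}, \S 8, reduce to producing a single index $\be_0$ with $Y_{\be_0}$ regular (since the transition maps $Y_\be\to Y_{\be_0}$ are smooth, regularity then propagates to all $\be\ge\be_0$), and detect this at a finite level using that the loci $\sing(Y_\be)$ are closed (excellence) together with noetherianity of $Y$. Steps 1 and 3, and the upward propagation at the end of Step 2, are fine.

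The one genuinely non-formal point, however, is exactly the clause you assert without argument: ``since $Y$ is regular, the pullback of $\sing(Y_\be)$ to $Y$ is empty.'' There is no general ``limit formula for the singular locus'': a filtered colimit of non-regular noetherian local rings can perfectly well be regular, so regularity of $Y$ does not by itself force the preimages $p_\be^{-1}(\sing(Y_\be))$ to vanish. What makes it true here is flatness of the transition maps: for $P\in Y$ lying over $P_\be\in Y_\be$, the homomorphism $\mathcal{O}_{Y_\be,P_\be}\to\mathcal{O}_{Y,P}$ is local and flat (a filtered colimit of flat maps), hence faithfully flat, and regularity descends along faithfully flat local homomorphisms of noetherian local rings; this gives $p_\be^{-1}(\sing(Y_\be))=\emptyset$ for \emph{every} $\be$ at once, after which your quasi-compactness/noetherianity step is not even needed for this part. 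You gesture at flatness but never invoke this descent statement, and it is the heart of the matter. The paper proves the same implication in a different dress: it assumes a point $P$ lying in all the $p_\be^{-1}(S_\be)$, descends it to $P_0\in Y_{\be_0}$, and observes that the local equations cutting out $Y_{\be_0}$ in the regular scheme $X_{\be_0}$ at $P_0$ fail to form a regular sequence, hence so do their pullbacks cutting out $Y$ in $X$ at $P$ --- flat base change in disguise. So your Step 2 closes once you state and apply flat descent of regularity explicitly. (A smaller shared gloss: both your write-up and the paper pass from ``$\sing(Y_{\be_0})$ misses the image of $Y$'' to ``$\sing(Y_{\be_0})=\emptyset$''; since $p_{\be_0}$ need not be surjective, one should replace $Y_{\be_0}$ by the open subscheme $Y_{\be_0}\setminus\sing(Y_{\be_0})$, which still pulls back to $Y$ and is quasi-projective over $X_{\be_0}$.)
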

\begin{proof}
Let $Y$ be closed in an open subscheme $U$ of $\p^n(X)$ (for some $n>0$). By Proposition 8.6.3 of \cite{ega43}, there exists a $\be_1\in B$, an open $U_{\be_1}\subset X_{\be_1}$, and a closed $Y_{\be_1}\subset U_{\be_1}$ such that $Y=Y_{\be_1}\times_{X_{\be_1}}X$. Hence it suffices to consider the case when $Y=Y_{\be_1}\times_{X_{\be_1}}X$, $Y_{\be_1}$ is closed in $X_{\be_1}$.
 
 The question is whether (in the situation described) we can choose %a regular $Y_{\be_1}$.
 a $\be_0\ge \be_1$ such that all the corresponding $Y_\be$ are regular.
 Since the connecting morphisms of $Y_\be$ are smooth, it suffices to find a $\be_0$ such that  $Y_{\be_0}$ is regular.
 Moreover, we obtain that the preimages of the singularity loci  $S_\be$ of $Y_\be$ with respect to the projections $p_\be:Y\to Y_\be$ form a projective system of closed subschemes of $Y$ (i.e. that they 'decrease'). Since $Y$ is noetherian, it suffices to verify: there cannot exist a Zariski point $P$ of $Y$ that  belongs to all of $p_\be\ob (S_\be)$.
 
 Assume the converse. Aapplying loc. cit. again we obtain that  $P$ is the preimage of a Zariski point  $P_0\in Y_{\be_0}$ for  a certain $\be_0\ge \be_1$. Then we obtain that the local equations that characterize $Y_{\be_0}$ in $X_{\be_0}$ at the point $P_0$ do not yield a regular sequence; hence the same is true for $Y$ in $X$ at the point $P$. Thus $Y$ is not regular at $P$, and we obtain a contradiction.

\end{proof}

The following statements %were not proved in \cite{degcis} explicitly; yet they 
follow from Proposition \ref{pcisdeg} easily. %Below we will mostly need assertion I1 in the case when $g$ is projective; note that in this case $g_*(\q_Y)\cong  g_!(\q_Y)$.
The limit argument that we use in the proof of assertion 2 of the Lemma is closely related with Remark 6.6 of \cite{lesm}.

\begin{lem}\label{l4onepo}

%I
1. %Nado??!!
Let $S=\cup S_l^\al$, $l\in L$,  be a (very regular) stratification (see the Notation section). 

Then  any $M\in \obj \dms$ belongs to the extension-closure (see Definition \ref{dses}(1)) of $j_{l!}j_l^*(M)$.

 In particular, $\z_S$ belongs to the extension-closure  of $j_!(\z_{S_l^\al})$.

%regular quasi-projective schemes over pro-smooth schemes: here?!?!

 %\item\label{ibormo} %Motivic cohomology of regular schemes??!!
2. %Assume that a scheme $S$ is 
For any pro-smooth $S$ (see the Notation) %variety over  a perfect field??!!
 and a
 %can be presented as the limit of an essentially affine (filtering) projective system of smooth varieties   $S_\be$ (for $\be\in B$) over a perfect field
smooth quasi-projective $x:X\to S$ %and $y:Y\to S$ %are smooth projective %proper?? No??!!
%morphisms. %, $S$ is a regular scheme over a field??!! ask??!! Remark: this includes smooth varieties over arbitrary fields??!!
%pro-smooth??!!
 %Theorem 1.8 of \cite{pop}??!! Limit with respect to dominant affine morphisms??!
 %Gysin needed???! 2.4.39??!! 11.3.4(2)!!  Admits and orientation??!! Apply {decdocgys2}??!
 %(everywhere) of dimension $d$, $i:Z\to X$ is a closed embedding, $p,q\in \z$, 
 %Then 
 we have: $\dms(x_!\z_X[-r](-q),\z_S) = \ns$
 %is isomorphic to $\ch_{d-q}(Z,2q-p)$ %(which we define  as $Gr_{d-q}^{\gamma}K'_{2q-p}(Z)\otimes\q)$. 
 %In particular, this morphism group is zero   
 if $r>2q$.

\end{lem}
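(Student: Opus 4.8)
My plan is to treat the two assertions separately: the first by a devissage along the tree defining the stratification, the second by reducing — via adjunction and the continuity of $\dm(-)$ — to a standard vanishing of motivic cohomology of smooth varieties over a field.

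\textbf{Assertion 1.} I would fix a binary tree $T^\al$ witnessing that $\al$ is very regular, and for each node $v$ let $S_v\subseteq S$ be the (regular) union of its leaf-node descendants, with $k_v\colon S_v\to S$ the corresponding immersion. The key claim, to be proved by induction on the size of the subtree rooted at $v$, is that $k_{v!}k_v^*M$ lies in the extension-closure (Definition \ref{dses}(1)) of $\{j_{l!}j_l^*M:\ S_l^\al\subseteq S_v\}$. For a leaf $v$ this is immediate since then $k_v=j_l$. For an internal node $v$ with left child $v_1$ and right child $v_2$, the definition of the stratification tree says $S_{v_1}$ is open in $S_v$ with closed complement $S_{v_2}$; writing $a\colon S_{v_1}\hookrightarrow S_v$, $b\colon S_{v_2}\hookrightarrow S_v$ for these immersions, all three schemes are regular, so I would apply the gluing triangle of Proposition \ref{pcisdeg}(\ref{iglu})(ii) on $S_v$ to $N:=k_v^*M$, namely $a_!a^*N\to N\to b_*b^*N$ (with $b_*=b_!$ since $b$ is proper). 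Applying the exact functor $k_{v!}$ and using $2$-functoriality together with $k_va=k_{v_1}$, $k_vb=k_{v_2}$ turns this into a distinguished triangle
\[k_{v_1!}k_{v_1}^*M\longrightarrow k_{v!}k_v^*M\longrightarrow k_{v_2!}k_{v_2}^*M,\]
and the inductive hypotheses for $v_1$, $v_2$ then give the claim for $v$, since the extension-closure is closed under extensions. Specializing $v$ to the root ($S_v=S$, $k_v=\id_S$) shows $M$ itself lies in the extension-closure of the $j_{l!}j_l^*M$; the "in particular" follows by taking $M=\z_S$ and using $j_l^*\z_S=\z_{S_l^\al}$ (Proposition \ref{pcisdeg}(\ref{iupstar})).

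\textbf{Assertion 2.} I would first write $S=\inli_{\be}S_\be$ with the $S_\be$ smooth over a perfect field $K$ and the transition maps smooth, affine and dominant (Notation section), with projections $p_{\be_0}\colon S\to S_{\be_0}$ and $p_{\be,\be_0}\colon S_\be\to S_{\be_0}$. Since $X$ is regular (smooth over the regular $S$), Lemma \ref{lega43}, together with the descent of smoothness to a finite level (\cite{ega43}), lets me fix $\be_0$ and a smooth quasi-projective $x_{\be_0}\colon X_{\be_0}\to S_{\be_0}$ with $X=X_{\be_0}\times_{S_{\be_0}}S$, so that each $X_\be:=X_{\be_0}\times_{S_{\be_0}}S_\be$ is a smooth $K$-variety. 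Setting $M:=(x_{\be_0})_!\z_{X_{\be_0}}(-q)[-r]$ (a compact object), base change (Proposition \ref{pcisdeg}(\ref{iexch}); for the immersions appearing in $x_{\be_0}$, base change holds along arbitrary morphisms, cf. Remark \ref{ridmot}) identifies $p_{\be_0}^*M$ with $x_!\z_X(-q)[-r]$ and $p_{\be,\be_0}^*M$ with $(x_\be)_!\z_{X_\be}(-q)[-r]$ (for $x_\be\colon X_\be\to S_\be$), while $p_{\be_0}^*\z_{S_{\be_0}}=\z_S$ and $p_{\be,\be_0}^*\z_{S_{\be_0}}=\z_{S_\be}$. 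Then the continuity statement Proposition \ref{pcisdeg}(\ref{icontp}) gives
\[\dms\bigl(x_!\z_X(-q)[-r],\,\z_S\bigr)\;\cong\;\inli_{\be\ge\be_0}\dm(S_\be)\bigl((x_\be)_!\z_{X_\be}(-q)[-r],\,\z_{S_\be}\bigr),\]
and it remains to see each term on the right vanishes for $r>2q$. Letting $s$ be the relative dimension of $x_\be$ (decompose $X$ into connected components if $x$ is not equidimensional), the adjunction $(x_\be)_!\dashv x_\be^!$ with $x_\be^*\z_{S_\be}=\z_{X_\be}$ and $x_\be^!(-)\cong x_\be^*(-)(s)[2s]$ (Proposition \ref{pcisdeg}(\ref{imotfun}),(\ref{iupstar}),(\ref{ipur})) rewrites that term as $\dm(X_\be)\bigl(\z_{X_\be},\z_{X_\be}(s+q)[2s+r]\bigr)=H^{2s+r}\bigl(X_\be,\z(s+q)\bigr)$, the motivic cohomology of the smooth $K$-variety $X_\be$. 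Since $r>2q$ forces $2s+r>2(s+q)$, I am reduced to the classical vanishing $H^m(Y,\z(n))=0$ for $Y$ smooth over a field and $m>2n$; I would get this from the identification $H^m(Y,\z(n))\cong CH^n(Y,2n-m)$ with Bloch's higher Chow group (which vanishes when $2n-m<0$, Bloch's cycle complex being in non-negative homological degrees), or from the coniveau spectral sequence $E_1^{p,q}=\bigoplus_{y\in Y^{(p)}}H^{q-p}(k(y),\z(n-p))\Rightarrow H^{p+q}(Y,\z(n))$, whose nonzero $E_1$-terms satisfy $0\le q-p\le n-p$, hence $m=p+q\le 2q\le 2n$.

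\textbf{Main obstacle.} Assertion 1 is essentially formal once one has the gluing triangles of Proposition \ref{pcisdeg}(\ref{iglu}) and the very-regularity of the stratification. The only genuinely delicate point is the limit argument in Assertion 2: descending the smooth quasi-projective $x$ to a finite level where the target is a smooth variety over a field (so that purity and the computation of motivic cohomology become available), and checking — through base change and the continuity of $\dm(-)$ — that the $\homm$-group in question is a filtered colimit of motivic cohomology groups of the $X_\be$. This is exactly the role of Lemma \ref{lega43} and of parts (\ref{iexch}) and (\ref{icontp}) of Proposition \ref{pcisdeg}; once the reduction is in place, the vanishing over a field is standard.
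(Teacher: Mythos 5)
Your proof is correct and follows essentially the same route as the paper: assertion 1 is the same induction over the stratification tree using the gluing triangles of Proposition \ref{pcisdeg}(\ref{iglu}), and assertion 2 rests on the same ingredients (Lemma \ref{lega43}, the continuity statement \ref{pcisdeg}(\ref{icontp}), relative purity, and the vanishing of motivic cohomology of smooth varieties over a perfect field in degrees above twice the weight). The only cosmetic difference is in assertion 2, where the paper first factors $x$ as (closed immersion)$\circ$(open immersion)$\circ$(smooth projective) and performs the adjunction and purity steps over $S$ itself, descending only the resulting closed immersion of pro-smooth schemes to a finite level, whereas you descend the whole smooth quasi-projective morphism first and then apply adjunction and purity over the $S_\be$; both orderings lead to the same vanishing.
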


\begin{proof}

1. We prove the assertion by induction on the number of strata. In the case when $\#L=1$ all the assertions are obvious.
 
 Now let $\#L>1$. By definition,   $\al$ is the union of two regular stratifications of $\beta$ and $\gamma$ of $U,Z\subset X$ respectively, such that $X,U,Z$ are regular, $Z$ is closed in $X$.
 We denote  the open immersion $U \to S$ by $j$ and the (closed) immersion $Z\to S$ by $i$.

By the inductive assertion, both $j_!j^*(M)$ and $i_!i^*M$ belong to the envelope in question (here we use the $2$-functoriality of $(-)^*$ and $(-)_!$). Hence the distinguished triangle (\ref{eglu}) yields the result.

2. Certainly, we can assume that $X$ is connected (see Proposition \ref{pcisdeg}(\ref{iadj})).
 We present $x$ as the composition $X\stackrel{i}{\to}Y \stackrel{u}{\to} P\stackrel{p}{\to} S$ where $i$ is a closed embedding of pro-smooth schemes, $u$ is an open embedding, and $p$ is a smooth projective morphism of codimension $d$ with connected domain. Applying %the adjunctions
 Proposition \ref{pcisdeg}(\ref{imotfun}, \ref{ipur},\ref{iupstar}) we obtain:  $$\begin{gathered} \dms(x_!\z_X[-r](-q),\z_S) =\dms(p_!u_!i_!\z_X[-r](-q),\z_S)\cong \dms(u_!i_!\z_X,p^!\z_S(q)[r])\\
 \cong \dms(u_!i_!\z_X,p^!\z_S(q)[r])\cong\dm(P)(u_!i_!\z_X,\z_P(q+d)[r+2d])
 \\
 \cong \dm(Y)(i_!\z_X,u^!\z_P(q+d)[r+2d])\cong
 \dm(Y)(i_!\z_X, \z_Y(q+d)[r+2d]).\end{gathered}$$
 
 Now, by Lemma \ref{lega43} we can assume  %(applying Theorem 8.8.2 %and 17.7.8??!! regularity of the corresponding scheme??! of \cite{ega43} along with a quasi-compactness argument) %??!!
 that $i$ is the filtered projective limit of closed embeddings $i_\be: U_\be \to Y_\be$ of smooth varieties over a perfect field (whereas the connecting morphisms of the system are smooth affine dominant).
Proposition \ref{pcisdeg}(\ref{iexch}, \ref{icont}) yields that it suffices to prove the vanishing %in question for 
 of $\dm(Y_\be)(i_{\be !}\z_{X_\be}, \z_{Y_\be}(q+d)[r+2d])$ for all $\be \in B$.

 Example 11.2.3 of ibid. easily yields that the latter group is isomorphic to the motivic cohomology group $H^{r+2d-2c,q+d-c}_{\cal M}(X_\be)$. It vanishes  since $r+2d-2c>2q+2d-2c$ (where $c$ is the codimension of $X_\be$ in $Y_\be$; see the notation and the calculations in loc. cit.).

\end{proof}

\subsection{Weight structures: short reminder and a new existence lemma}\label{sws}

\begin{defi}\label{dwstr}

I A pair of subclasses $\cu_{w\le 0},\cu_{w\ge 0}\subset\obj \cu$ %(of {\it $w$-negative} and {\it $w$-positive} objects, respectively)
will be said to define a weight
structure $w$ for $\cu$ if 
they  satisfy the following conditions:

(i) $\cu_{w\ge 0},\cu_{w\le 0}$ are additive and Karoubi-closed in $\cu$
(i.e. contain all $\cu$-retracts of their objects).

(ii) {\bf Semi-invariance with respect to translations.}

$\cu_{w\le 0}\subset \cu_{w\le 0}[1]$, $\cu_{w\ge 0}[1]\subset
\cu_{w\ge 0}$.

(iii) {\bf Orthogonality.}

$\cu_{w\le 0}\perp \cu_{w\ge 0}[1]$.

(iv) {\bf Weight decompositions}.

 For any $M\in\obj \cu$ there
exists a distinguished triangle
\begin{equation}\label{wd}
B\to M\to A\stackrel{f}{\to} B[1]
\end{equation} 
%\end{equation} 
such that $A\in \cu_{w\ge 0}[1],\  B\in \cu_{w\le 0}$.

II The category $\hw\subset \cu$ whose objects are
$\cu_{w=0}=\cu_{w\ge 0}\cap \cu_{w\le 0}$, $\hw(Z,T)=\cu(Z,T)$ for
$Z,T\in \cu_{w=0}$,
 will be called the {\it heart} of %the weight structure
$w$.

%Also, for $p\in \z$ we will denote $\cu^{w=0}[-p]$ by $\cu^{w=p}$.

III $\cu_{w\ge i}$ (resp. $\cu_{w\le i}$, resp.
$\cu_{w= i}$) will denote $\cu_{w\ge
0}[i]$ (resp. $\cu_{w\le 0}[i]$, resp. $\cu_{w= 0}[i]$).

IV We denote $\cu_{w\ge i}\cap \cu_{w\le j}$
by $\cu_{[i,j]}$ (so it equals $\ns$ for $i>j$).

V We will  %say that $(\cu,w)$ is {\it  bounded}  if
call
$\cu^b=\cup_{i\in \z} \cu_{w\le i}\cap \cup_{i\in \z} \cu_{w\ge i}$ the class of {\it bounded} %from ??!!
objects of $\cu$. We will say that $w$ is bounded if $\cu^b=\obj \cu$.

Besides, we will call $\cup_{i\in \z} \cu_{w\le i}$ the class of {\it bounded above} %from ??!!
objects. %triangulated subcategories (in {pbw})??!

VI $w$ will be called %{\it non-degenerate} if $\cap_l \cu^{w\ge l}=\cap_l \cu^{w\le l}=\ns.$ %
{\it non-degenerate from above} if $\cap_l \cu^{w\ge l}=\ns.$

VII Let $\cu$ and $\cu'$ 
be triangulated categories endowed with
weight structures $w$ and
 $w'$, respectively; let $F:\cu\to \cu'$ be an exact functor.

$F$ will be called {\it left weight-exact} %??!!!!
(with respect to $w,w'$) if it maps
$\cu_{w\le 0}$ to $\cu'_{w'\le 0}$; it will be called {\it right weight-exact} if it
maps $\cu_{w\ge 0}$ to $\cu'_{w'\ge 0}$. $F$ is called {\it weight-exact}
if it is both left %weight-exact
and right weight-exact.

 VIII We call a category $\frac A B$ a {\it factor} of an additive
category $A$
by its (full) additive subcategory $B$ if $\obj \bl \frac A B\br=\obj
A$ and $(\frac A B)(M,N)= A(M,N)/(\sum_{O\in \obj B} A(O,N) \circ
A(M,O))$.

\begin{comment}
%%@{big}
IX For an additive $B$ we will consider the  category
of 'formal coproducts' of objects of $B$:
its objects are (formal) $\coprod_{j\in J}B_j:\ B_j\in \obj B$, and
$\mo(\coprod_{l\in L}B_{l},\coprod_{j\in J}B'_{j})
=\prod_{l\in L}(\bigoplus_{j\in J}B(B_{l},B'_{j}))$;
here $L,J$ are index sets. We will call the idempotent completion of this category the {\it big hull} of $B$.
\end{comment}

\end{defi}

\begin{rema}\label{rstws}

1. A weight decomposition (of any $M\in \obj\cu$) is (almost) never canonical;
still we will sometimes denote (any choice of) a pair $(B,A)$ coming from in (\ref{wd}) by $(w_{\le 0}M,w_{\ge 1}M)$. 
For an $l\in \z$ we denote by $w_{\le l}M$ (resp. $w_{\ge l}M$) a choice of  $w_{\le 0}(M[-l])[l]$ (resp. of $w_{\ge 1}(M[1-l])[l-1]$).

 We will call (any choices of) $(w_{\le l}M,w_{\ge l}M)$  {\it weight truncations} of $M$.

2. A  simple (and yet  useful) example of a weight structure comes from the stupid
filtration on the homotopy categories of cohomological complexes
$K(B)$ for an arbitrary additive  $B$. %; we will call it the 'stupid' weight structure.
In this case
$K(B)_{w\le 0}$ (resp. $K(B)_{w\ge 0}$) will be the class of complexes that are
homotopy equivalent to complexes
 concentrated in degrees $\ge 0$ (resp. $\le 0$).  The heart of this weight structure %(either for $K(A)$ or for $K^b(A)$)
is the Karoubi-closure  of $B$ in $K(B)$.
 in the corresponding category.  %We will use the notation $K(A)_{[i,j]}$ below following Definition \ref{dwstr}(IV).

3. In the current paper we use the 'homological convention' for weight structures; 
it was previously used in \cite{hebpo}, \cite{wildic},  and  \cite{brelmot}, whereas in 
\cite{bws} and in \cite{bger} the 'cohomological convention' was used. In the latter convention 
the roles of $\cu_{w\le 0}$ and $\cu_{w\ge 0}$ are interchanged i.e. one considers   $\cu^{w\le 0}=\cu_{w\ge 0}$ and $\cu^{w\ge 0}=\cu_{w\le 0}$. So,  a complex $X\in \obj K(B)$ whose only non-zero term is the fifth one %(i.e. $X^5\neq 0$; cf. Definition \ref{dwstr}(5) below)??!
 has weight $-5$ in the homological convention, and has weight $5$ in the cohomological convention. Thus the conventions differ by 'signs of weights'; %whereas
 $K(B)_{[i,j]}$ is the class of retracts of complexes concentrated in degrees $[-j,-i]$. %, %nado??!!
%$K(B)^{[i,j]}$ consists of retracts of complexes concentrated in degrees $[i,j]$ (we will use this notation below). %??!!
  
   %The homological convention is not very natural for the case $C=K(B)$; in particular, we will denote $K(B)^{[i,j]}$ 
 
\end{rema}

Now we recall those properties of weight structures that
will be needed below (and that can be easily formulated).
We will not mention more complicated matters (weight spectral sequences and weight complexes) %nado?? and $K_0$??
here; instead we will just formulate
the corresponding 'motivic' results below.

\begin{pr} \label{pbw}
Let $\cu$ be a triangulated category. 

\begin{enumerate}

\item\label{iext} %nado??!!
Let $w$  be a weight structure for
 $\cu$. Then  $\cu_{w\le 0}$, $\cu_{w\ge 0}$, and $\cu_{w=0}$
are extension-closed (see  Definition \ref{dses}(1)).

Besides, for any $M\in \cu_{w\le 0}$ we have $w_{\ge 0}M\in \cu_{w=0}$ (for any choice of $w_{\ge 0}M$).

\item\label{iwefun} For $\cu,w$ as in the previous assertions weight decompositions are {\it weakly functorial} i.e. any $\cu$-morphism of objects has a (non-unique) extension to a morphism of (any choices of) their weight decomposition triangles.

\item \label{iadjco}
A composition of left (resp. right) weight-exact functors is left (resp. right) weight-exact.

\item \label{iadj}

Let $\cu$ and $\du$ be triangulated categories endowed with weight structures $w$ and $v$, respectively. Let
$F: \cu \leftrightarrows \du:G$ be adjoint functors. Then $F$ is left weight-exact whenever $G$ is right weight-exact.

\item\label{iloc}

Let $w$  be a weight structure for
 $\cu$; let $\du\subset \cu$ be a %strict
triangulated subcategory of
$\cu$. Suppose
 that $w$ yields a weight structure $w_{\du}$ for  $\du$
(i.e. $\obj \du\cap \cu_{w\le
 0}$ and $\obj \du\cap \cu_{w\ge
 0}$ give a weight structure for $\du$). %;we denote the heart of this weight structure  by $HD$.

 Then $w$ also induces a weight structure on
 $\cu/\du$ (the localization i.e. the Verdier quotient of $\cu$
by the Karoubi-closure of $\du$) in the following sense: the Karoubi-closures of $\cu_{w\le
 0}$ and $\cu_{w\ge
 0}$ (considered as classes of objects of $\cu/\du$) give a weight structure $w'$ for $\cu/\du$
(note that $\obj \cu=\obj \cu/\du$). Besides, there exists a full embedding $\frac {\hw}{\hw_{\du}}\to \hw'$; $\hw'$ is the Karoubi-closure  of $\frac{\hw}{\hw_{\du}}$ in
$\cu/\du$.

Moreover, assume that  %?? gluing??!!
%$j_!\to j_*$??!!
the embedding of $i:\du\to \cu$ possesses both a left and a right adjoint i.e. that  $\du\stackrel{i_*}{\to}\cu\stackrel{j^*}{\to}\cu/\du$ is a part of gluing data (so that there also exist $j_*$ and $j^!$ and these six functors satisfy the conditions of Proposition \ref{pcisdeg}(\ref{iglup}, \ref{iglu}); see Chapter 9 of \cite{neebook}). Then all objects of  $\hw'$ come from $\hw$ (i.e. we do not need a Karoubi-closure here). 

\item\label{igluws}
Let $\du\stackrel{i_*}{\to}\cu\stackrel{j^*}{\to}\eu$ be a part of gluing data (as described in the previous assertion). 

Then for any pair of weight structures on $\du$ and $\eu$ (we will denote them by $w_\du$ and $w_\eu$, respectively)
there exists a %unique  %?? prove??
 weight structure $w$ for $\cu$ such that both $i_*$ and $j^*$ are weight-exact (with respect to the corresponding weight structures). Besides, $i^!$ and $j_*$ are right weight-exact (with respect to the corresponding weight structures); $i^*$ and $j_!$ are left weight-exact. Moreover, 
$\cu_{w\ge 0}=C_1=\{M\in \obj
\cu:\ i^!(M)\in \du_{w_\du\ge 0} ,\ j^*(M)\in \eu_{w_\eu\ge 0} \}$ and
$\cu_{w\le 0}=C_2=\{M\in \obj \cu:\ i^*(M)\in \du_{w_\du\le 0} ,\ j^*(M)\in
\eu_{w_\eu\le 0} \}$. Lastly, $C_1$ (resp. $C_2$) is the  envelope  of $j_!(\eu_{w\le
0})\cup  i_*(\du_{w\le 0})$  (resp. of $ j_*(\eu_{w\ge 0})\cup i_*(\du_{w\ge 0})$).

\item\label{igluwsn} In the setting of  assertion \ref{igluws}, the weight structure $w$ described is the only weight structure for $\cu$ such that the functors $i_!,j_!,j^*$ and $i^*$ are left weight-exact (we will say that $w$ is {\it glued from} $w_\du$ and $w_\eu$). %a modified version; obtained by applying {iadj}??!!

\end{enumerate}
\end{pr}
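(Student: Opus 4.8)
The plan is to reduce the uniqueness claim to an equality of the classes of objects of non-positive weight, and then to play off against each other the two descriptions of this class furnished by assertion \ref{igluws}. Let $w'$ be any weight structure on $\cu$ for which the four functors $i^*$, $i_*\,(=i_!)$, $j_!$ and $j^*$ are left weight-exact; here left weight-exactness of each functor is understood with respect to the weight structures on its source and target, the one on $\cu$ being $w'$ and those on $\du$, $\eu$ being $w_\du$, $w_\eu$. By assertion \ref{igluws} the glued weight structure $w$ itself has this property, so it suffices to prove $w'=w$. I would first record that a weight structure is recovered from its class of objects of non-positive weight: $\cu_{w'\ge 1}=(\cu_{w'\le 0})^{\perp}$ --- for $Y$ in the right-hand side, in a weight decomposition triangle $w_{\le 0}Y\to Y\to w_{\ge 1}Y\to (w_{\le 0}Y)[1]$ the first morphism lies in $\cu(w_{\le 0}Y,Y)=\ns$, so $Y$ is a retract of $w_{\ge 1}Y$ and hence lies in $\cu_{w'\ge 1}$ by Karoubi-closedness, the reverse inclusion being the orthogonality axiom; cf. \cite{bws} --- and then $\cu_{w'\ge 0}=\cu_{w'\ge 1}[-1]$. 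Thus everything reduces to checking $\cu_{w'\le 0}=\cu_{w\le 0}$.

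For the inclusion $\cu_{w\le 0}\subseteq\cu_{w'\le 0}$ I would invoke the ``generative'' half of assertion \ref{igluws}: $\cu_{w\le 0}$ is the extension-closure of $j_!(\eu_{w_\eu\le 0})\cup i_*(\du_{w_\du\le 0})$. Left weight-exactness of $j_!$ and $i_*$ places $j_!(\eu_{w_\eu\le 0})\cup i_*(\du_{w_\du\le 0})$ inside $\cu_{w'\le 0}$, which is extension-closed (Proposition \ref{pbw}(\ref{iext})) and Karoubi-closed (Definition \ref{dwstr}(i)); hence $\cu_{w'\le 0}$ contains the extension-closure in question, i.e. $\cu_{w\le 0}$. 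For the reverse inclusion $\cu_{w'\le 0}\subseteq\cu_{w\le 0}$ I would invoke the ``detective'' half: $\cu_{w\le 0}=C_2=\{M\in\obj\cu:\ i^*(M)\in\du_{w_\du\le 0},\ j^*(M)\in\eu_{w_\eu\le 0}\}$. For $M\in\cu_{w'\le 0}$, left weight-exactness of $i^*$ and $j^*$ forces $i^*(M)\in\du_{w_\du\le 0}$ and $j^*(M)\in\eu_{w_\eu\le 0}$, so $M\in C_2=\cu_{w\le 0}$. Together the two inclusions give $\cu_{w'\le 0}=\cu_{w\le 0}$, hence $w'=w$.

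I do not expect a genuine obstacle here: granting assertion \ref{igluws}, the argument is purely formal, resting on the contrast between the ``from below'' description of $\cu_{w\le 0}$ (as an extension-closure built from $j_!$ and $i_*$) and its ``from above'' description (via $i^*$, $j^*$), together with the elementary recovery of a weight structure from its non-positive part. The only points requiring attention are bookkeeping ones: checking that the four functors named in the statement are exactly those listed as left weight-exact in assertion \ref{igluws} (so that $w$ is among the candidates and the uniqueness claim is non-vacuous), and keeping straight that $j_!$ and $i_*$ occur as functors \emph{into} $(\cu,w')$ whereas $i^*$ and $j^*$ occur as functors \emph{out of} $(\cu,w')$.
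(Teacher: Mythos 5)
Your argument for the uniqueness assertion (part \ref{igluwsn}) is correct and is the standard formal one: recover $\cu_{w'\ge 0}$ as $(\cu_{w'\le 0})^{\perp}[-1]$, then pin down $\cu_{w'\le 0}$ by squeezing it between the ``generators'' description (the envelope of $j_!(\eu_{w_\eu\le 0})\cup i_*(\du_{w_\du\le 0})$, which lands in $\cu_{w'\le 0}$ by left weight-exactness of $j_!$ and $i_!=i_*$ plus extension- and Karoubi-closedness of $\cu_{w'\le 0}$) and the ``test-functor'' description $C_2$ (which contains $\cu_{w'\le 0}$ by left weight-exactness of $i^*$ and $j^*$). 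This matches, up to a cosmetic variant, the proof in the cited source (Proposition 1.2.3 of \cite{brelmot}); the paper itself gives no argument for this item beyond that citation. Two caveats. First, you silently correct what is evidently a typo in the ``Lastly'' clause of assertion \ref{igluws}: as printed it attaches the envelope of $j_!(\eu_{w\le 0})\cup i_*(\du_{w\le 0})$ to $C_1=\cu_{w\ge 0}$ rather than to $C_2=\cu_{w\le 0}$; your (corrected) reading is the one needed for the argument and is the one true in \cite{bws}. Second, your proposal addresses only the last of the seven assertions and takes assertion \ref{igluws} as given; the remaining items are left entirely to citation, which is also what the paper does, except for the ``moreover'' part of assertion \ref{iloc} (that $\hw'$ needs no Karoubi-closure when the embedding admits both adjoints), where the paper does supply an actual argument by adapting Theorem 1.7 of \cite{wildic} and which your proposal does not touch.
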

\begin{proof} %(\ref{}--\ref{}): see Proposition ? of \cite{brelmot}

%Almost all
Most of the assertions %except assertion \ref{igluwse}
were proved in \cite{bws} (pay attention to Remark \ref{rstws}(3)!); some more precise information can be found in (the proof of) Proposition 1.2.3  of \cite{brelmot}.

It only remains to note that the 'moreover part' of assertion \ref{iloc}
generalizes Theorem 1.7 of \cite{wildic}. The proof of loc. cit. carries over to our (abstract) setting without any %difficulty
changes (note here that the existence of the natural transformation $j_!\to j_*$ used there is an immediate consequence of the adjunctions given by the gluing data setting).
\end{proof}

\begin{rema}\label{rlift}

Part \ref{iloc} of the proposition can be re-formulated as follows. If $i_*:\du\to \cu$ is an embedding of triangulated categories that is weight-exact (with respect to certain weight structures for $\du$ and $\cu$), an exact functor $j^*:\cu\to \eu$ is equivalent to the localization of $\cu$ by $i_*(\du)$, then there exists a unique weight structure $w'$ for $\eu$ such that $j^*$ is weight-exact; $\hw_{\eu}$ is the Karoubi-closure of $\frac {\hw} {i_*(\hw_{\du})}$ (with respect to the natural functor $\frac {\hw}{i_*(\hw_{\du})}\to \eu$).

\end{rema}

Now we prove a certain %somewhat new 
statement on the existence of weight structures (as none of the existence results  stated in \S4 of \cite{bws} %are not 
are sufficient for our purposes); it %seems to be similar to 
is a slight reformulaion of the main result of \cite{paukcomp}.

\begin{pr}\label{pnews}

Let $\cu$ be triangulated category that is closed with respect to all coproducts;
let $C'\subset \cu$ be a (proper!) set of compact objects such that $C'\subset C'[1]$. Then the classes $C_1=C'^{\perp}[-1]$ and 
$C_2$ being the big extension-closure of $C'$ yield a weight structure for $\cu$ (i.e. there exists a $w$ such that $\cu_{w\ge 0}=C_1$, $\cu_{w\le 0}=C_2$).
 %we have: there exists a weight structure for $\cu$ such that $\cu^{w\ge 0}=C'$, $\cu^{w\le 0}$ is . 
\end{pr}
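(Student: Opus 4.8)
The plan is to verify the four axioms of Definition \ref{dwstr}(I) for the pair $(\cu_{w\ge 0},\cu_{w\le 0})=(C_1,C_2)$ directly, exploiting the homological-algebra machinery of \S\ref{scolim}. Axiom (i) is immediate: $C_1=C'^\perp[-1]$ is Karoubi-closed and closed under (finite) coproducts by Remark \ref{rperp} (retracts and coproducts go through since $C'$ consists of compact objects), and $C_2$ is Karoubi-closed by construction. Axiom (ii), semi-invariance under translation, is also easy: $C_2\subset C_2[1]$ follows since $C'\subset C'[1]$ forces the big extension-closure to be shift-stable in one direction (any strongly extension-closed coproduct-closed Karoubi-closed class containing $C'[1]$ contains $C'$, so $C_2[1]\supset C_2$), and dually $C_1\subset C_1[1]$ since $C'^\perp\subset (C'[-1])^\perp=C'^\perp[-1][1]$... more precisely $C'\subset C'[1]$ gives $C'[1]^\perp\subset C'^\perp$, i.e. $C_1[1]=C'^\perp\supset C'[1]^\perp = C_1[2]$, wait — I will instead argue $C'[-1]\subset C'$, so $C'^\perp\subset C'[-1]^\perp$, giving $C_1=C'^\perp[-1]\subset C'^\perp[-2]\cdot[1]$... the cleanest route is: $C_2$ shift-stable upward is formal, and then axiom (ii) for $C_1$ plus orthogonality can be deduced together. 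Axiom (iii), orthogonality $C_2\perp C_1[1]$, is exactly where Lemma \ref{lbes}(\ref{isesperp}) does the work: by definition $C'\perp C'^\perp = C_1[1]$, and $C_1[1]$ is a fixed class, so the big extension-closure $C_2$ of $C'$ also satisfies $C_2\perp C_1[1]$.

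The heart of the argument is axiom (iv), the existence of weight decompositions. The plan is the standard small-object / Postnikov-tower construction adapted to weight structures (this is the content of \cite{paukcomp}, which the statement says we are reformulating). Given $M\in\obj\cu$, I build a sequence $M=M_0\to M_1\to M_2\to\cdots$ together with triangles $N_i\to M_i\to M_{i+1}$ where each $N_i$ is a coproduct of shifts $C'[j]$, $j\le 0$, chosen (via compactness of the objects of $C'$) so that the induced map kills more and more of $\bigoplus_{j\le 0}\cu(C'[j],M_i)$; concretely $N_i=\coprod_{\phi} (\text{source of }\phi)$ ranging over all $\phi\colon C'[j]\to M_i$ with $j\le 0$. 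Setting $Y=\inli M_i$, the homotopy colimit, one gets a triangle $B\to M\to Y$ where $B$ — the homotopy colimit of the $N_i$-Postnikov pieces, or rather the cone-shifted object — lies in $C_2$ because $C_2$ is \emph{strongly} extension-closed (this is the whole point of Definition \ref{dses}(2)/(3): it is closed under exactly the countable homotopy colimits built from iterated cones landing in $C_2$), while $Y\in C_1[1]=C'^\perp$ because for each compact $C'[j]$, $j\le 0$, Lemma \ref{coulim}(3) gives $\cu(C'[j],Y)=\inli_i \cu(C'[j],M_i)$, and the construction was rigged so every element of every $\cu(C'[j],M_i)$ dies at a later stage. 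Rotating, $B\to M\to Y\to B[1]$ with $B\in C_2=\cu_{w\le 0}$ and $Y\in C'^\perp=\cu_{w\ge 0}[1]$, which is axiom (iv).

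The main obstacle — and the step to write carefully — is bookkeeping in the transfinite/countable construction so that $Y$ really ends up in $C'^\perp$: one must ensure that \emph{all} shifts $C'[j]$ with $j\le 0$ (not just $j=0$) are accounted for, and that maps created at stage $i$ (from the new summands in $N_i$) are themselves killed later; this is handled by enumerating $\bigcup_{j\le 0}\cu(C'[j],M_i)$ at \emph{every} stage rather than once. A secondary point requiring care is confirming $B\in C_2$: one should present $B$ as a homotopy colimit of a sequence $B_0\to B_1\to\cdots$ with $B_0=N_0\in C_2$ (a coproduct of $C'[j]$, $j\le 0$, hence in $C_2$ since $C_2\supset C'$, is shift-closed downward, and is coproduct-closed) and $\co(B_i\to B_{i+1})\cong N_{i+1}\in C_2$, so that strong extension-closedness applies verbatim. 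Finally one records the standard corollaries that fall out of the construction: $w$ is non-degenerate from above (since $\cap_l\cu_{w\ge l}\subset\cap_l C'[l]^\perp=\{0\}$ as the $C'[l]$ weakly generate... actually only if they do — here one just notes $C'^\perp$-type intersection vanishing follows from the explicit decompositions), and $\cu^b$ is as expected; but these are afterthoughts, not part of the stated claim.
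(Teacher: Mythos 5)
Your proposal is correct and follows essentially the same route as the paper: orthogonality via Lemma \ref{lbes}(\ref{isesperp}), and weight decompositions via the iterated-cone (small-object) tower, with $B=\inli B_k\in C_2$ by strong extension-closedness and the third vertex lying in $C'^{\perp}$ by compactness and Lemma \ref{coulim}(3). The only cosmetic differences are that the paper defines $A$ as a cone of $B\to M$ and checks $C'\perp A$ directly through long exact sequences (rather than identifying the cone with $\inli M_i$, which would require a further argument), and that no separate bookkeeping for the shifts $C'[j]$, $j\le 0$, is needed since the hypothesis $C'\subset C'[1]$ already gives $C'[-1]\subset C'$.
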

\begin{proof}
Obviously, $(C_1,C_2)$ are Karoubi-closed in $\cu$, $C_1[1]\subset C_1$, $C_2\subset C_2[1]$.
Besides, $C_2\perp C_1[1]$ by Lemma \ref{lbes}(\ref{isesperp}).

It remains to verify that any $M\in \obj \cu$ possesses a weight decomposition with respect to $(C_1,C_2)$. We apply (a certain modification of) the method used in the proof of Theorem 4.5.2(I) of \cite{bws} (cf. also the construction of  crude cellular towers in \S I.3.2 of \cite{marg}). %crude cellular approximation??!!

  We construct a certain sequence of $M_k$ for $k\ge 0$ by induction in $k$ starting
from $M_0=M$. Suppose that $M_k$ (for some $k\ge 0$) is already constructed; then we take $P_k=\coprod_{(c,f):\,c\in C',f\in \cu(c,M_k)}c$; $M_{k+1}$ is a cone of the morphism $\coprod_{(c,f):\,c\in C',f\in \cu(c,M_k)}f:P_k\to M_k$.

Now we 'assemble' $P_k$. 
The compositions of the morphisms $h_k:M_{k}\to M_{k+1}$ given by this construction yields morphisms $g_i:M\to M_i$ for all $i\ge 0$. Besides, the octahedral axiom of triangulated categories %easily
immediately yields %: there exist morphisms 
$\co (h_k)\cong P_k[1]$. Now we complete $g_k$ to distinguished triangles $B_k\stackrel{b_k}{\to}M \stackrel{g_k}{\to}M_k$. The octahedral axiom yields the existence of morphisms $s_i:B_i\to B_{i+1}$ that are compatible with $b_k$  %the morphisms $B_k\to M$ given by the construction,
 such that 
$\co (s_i)\cong P_i$ for all $i\ge 0$.

We consider $B=\inli B_k$; by Lemma \ref{coulim}(1) $(b_k)$ lift to a certain morphism $g: B\to M$.
 We complete $b$ to a distinguished triangle $B\stackrel{b}{\to} M\stackrel{a}{\to} A\stackrel{f}{\to} B[1]$.
This triangle will be our candidate for a weight decomposition of $M$.

First we note that $B_0=0$; since $\co (s_i)\cong P_i$ we have
 $B\in C_2$ by the definition of the latter.

It remains to prove that $A\in C_1[1]$ i.e. that $C'\perp A$. 
For a $c\in C'$ we should check that
$\cu(c,A)=\ns$. The long exact sequence  $$\dots  \to \cu(c,B)\to  \cu(c,M)\to \cu(c,A)\to \cu(c, B[1])\to \cu(c,M[1])\to\dots $$
translates this into: $\cu(c,-)(b)$ is surjective and $\cu(c,-)(b[1])$ is injective. %bijective??!! surjectivity ok??!!
Now, by Lemma \ref{coulim}(3), $\cu(c,B)\cong\inli \cu(c,B_i)$ and $\cu(c,B[1])\cong\inli \cu(c,B_i[1])$. Hence the long exact sequences
$$\dots  \to \cu(c,B_k)\to  \cu(c,M)\to \cu(c,M_k)\to \cu(c, B[1])\to \cu(c,M_k[1])\to\dots $$ yield: it suffices to verify that %$\inli {\cu(c,-)(h_k)} %M_k
$\inli \cu(c,M_k) =\ns$ (note here that $h_k$ are compatible with $s_k$). 
Lastly, %It remains to note that
 $\cu(c,P_k)$ surjects onto $\cu(c,M_k)$; hence %$\cu(c,-)(h_k)=0$ for any $k\ge 0$.
the group $\cu(c,M_k)$ dies in $\cu(c,M_{k+1})$ for any $k\ge 0$ and we obtain the result. 

 %we ? in order to get rid of ? in ?

\end{proof}

\begin{rema}
%Much more 
Much interesting information on $w$ can be obtained from (the dual to) Theorem 2.2.6 of \cite{bger}. % (in the dual form). 
\end{rema}

\section{Our main results: the construction and  properties of the Chow weight structure}\label{swchow}

In \S\ref{schowunr}  we %prove that there exists
define a certain weight structure $\wchow$ for $\dms$. %such that??!
%(for a %regular 
%excellent separated finite-dimensional $S$). %?!!
 %scheme $S/\spe q$ %??! that is not necessarily reasonable).
 % that is 'glued from the Chow weight structures for the residue field of $S$' (via stratifications and 'gluing' of weight structures). 
%In \S\ref{sfwchow} we 
and study its properties. In particular, we describe the 'functoriality' of $\wchow$ (with respect to functors of the type $f^*,f_*,f^!$, and $f_!$, for $f$ being a quasi-projective morphism of schemes). 
%We also prove that $\wchow$ can be described 'pointwisely' (similarly to \S5.1.8 of \cite{bbd}), and prove that it is 'continuous' (in a certain sense).

In \S\ref{srat} we discuss the relation of our 'integral weights' with the ones for motives with rational coefficients.

In \S\ref{sappl} we briefly describe some %possible applications
(immediate) consequences 
of our results.

\subsection{The main results %construction of $\wchow$ over any  (excellent separated finite-dimensional)  $S$
}\label{schowunr}

%In this paragraph all  %residue fields of all
%schemes (including the base scheme $S$) will be regular excellent separated finite-dimensional $\spe \q$-ones.

\begin{defi}\label{dwchow}
For a scheme $S$ we denote by $C'=C'(S)$ the set of all $p_!(\z_P)(r)[2r-q]$, where  $p:P\to Z$ runs through all quasi-projective %?? nado???!! in order to fit with {ipur}!!
 morphisms with regular domain, $r\in \z$, $q\ge 0$. Note that we can assume $C'$ to be a (proper) set; its objects are compact in $\dms$. %; it contains $0=p_{\emptyset}\z_\emptyset$ for the morphism $p_{\emptyset}:\emptyset \to S$.

Then we denote by $\wchow=\wchow(S)$ the weight structure corresponding to $S$ by Proposition \ref{pnews} (i.e. $\dms_{\wchow\ge 0}=C'^\perp[-1]$, $\dms_{\wchow\le 0}$ is the big extension-closure of $C'$). 

\end{defi}

We prove the main properties of $\wchow$.

\begin{theo}\label{twchowa}

I The functor $-(n)[2n](=\otimes \z(n)[2n]):\dms\to\dms$ is weight-exact with respect to $\wchow$ for any %(reasonable)
$S$ and any $n\in \z$.

II Let $f:X\to Y$ be a 
quasi-projective  
 morphism of 
 schemes.

1. $f_!$ is left weight-exact, $f^!$ is right weight-exact. Moreover, $f^*$ is left weight-exact and  $f_*$ is right weight-exact if $X$ and $Y$ are regular.

2. Suppose moreover that $f$ is smooth. Then $f^*$
and $f^!$ are also weight-exact.

3. Moreover, $f^*$ is weight-exact for  $f$ being  a
(filtered) projective limit of smooth quasi-projective %??!!
 morphisms such that the corresponding connecting morphisms are dominant smooth %nado??
 affine. %nafig??!!
 
 %In case (i) $f^!$ is weight-exact also.

III  Let $i:Z\to X$ be a closed immersion of regular schemes; %of reasonable schemes;
let $j:U\to X$ be the complimentary open immersion.

1. $\chow(U)$ is %the idempotent completion of 
the factor (in the sense of  Definition \ref{dwstr}(VIII)) of $\chow(X)$ by $i_*(\chow(Z))$. %% More for smooth $Z$ and $X$??

2. For $M\in \obj \dmx$ we have: $M\in \dmx_{\wchow\ge 0}$ (resp. $M\in \dmx_{\wchow\le 0}$) whenever $j^!(M)\in \dm(U)_{\wchow\ge 0}$ and $i^!(M)\in \dm(Z)_{\wchow\ge 0}$ (resp.  $j^*(M)\in \dm(U)_{\wchow\le 0}$ and $i^*(M)\in \dm(Z)_{\wchow\le 0}$).

IV Let $S=\cup S_l^\al$ be a very regular stratification (of a regular $S$), $j_l:S_l^\al\to S$ are the corresponding immersions. Then for $M\in \obj \dms$ we have: $M\in \dms_{\wchow\ge 0}$ (resp. $M\in \dms_{\wchow\le 0}$) whenever $j_l^!(M)\in \dm(S_l^\al)_{\wchow\ge 0}$ (resp. $j_l^*(M)\in \dm(S_l^\al)_{\wchow\le 0}$)  for all $l$.

V %For a regular $S$ %glue??!! MV??!
%Let  $S$ possess an embedding into a regular scheme. Then
For a regular $S$ the following statements are valid.

1. Any object of $\dmcs$ is bounded above with respect to $\wchow(S)$.

2. $\wchow(S)$ is non-degenerate from above.

3.  $\z_S\in \dms_{\wchow\le 0}$.

4. If $S$ is also pro-smooth, then $\z_S\in \dms_{\wchow=0}$.

%bounded??! Gluing??!

\end{theo}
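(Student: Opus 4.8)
The plan is to prove the five groups of statements more or less in the order given, since each relies on the previous ones, and to reduce everything to the gluing machinery of Proposition \ref{pbw}(\ref{igluws}--\ref{igluwsn}) together with the vanishing computation of Lemma \ref{l4onepo}(2). First, for Part I: since $-(n)[2n]$ is an auto-equivalence of $\dms$ that sends each generator $p_!(\z_P)(r)[2r-q]$ of $C'(S)$ to $p_!(\z_P)(r+n)[2(r+n)-q]$, it permutes $C'(S)$ up to the shift in $r$; hence it preserves $C'(S)^\perp$ and (using Lemma \ref{lbes}(\ref{isesfun}), as it commutes with coproducts) the big extension-closure of $C'(S)$. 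Thus it is weight-exact essentially by the characterization of $\wchow$ in Definition \ref{dwchow}.

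For Part II the key point is left weight-exactness of $f_!$. Since $f$ is quasi-projective we may factor it through an open immersion and a smooth projective morphism, and for the generators $p_!(\z_P)(r)[2r-q]$ of $C'(X)$ with $p:P\to X$ quasi-projective regular domain, $f_!p_!=(f\circ p)_!$ is again of the generating form for $C'(Y)$; so $f_!(C'(X))\subset C'(Y) \subset \dm(Y)_{\wchow\le 0}$. As $f_!$ commutes with coproducts, Lemma \ref{lbes}(\ref{isesfun}) gives $f_!(\dm(X)_{\wchow\le 0})\subset \dm(Y)_{\wchow\le 0}$, i.e. $f_!$ is left weight-exact; then $f^!$ is right weight-exact by adjunction (Proposition \ref{pbw}(\ref{iadj})). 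For the regular case, to see $f^*$ is left weight-exact one checks $f^*(C'(Y))\subset \dm(X)_{\wchow\le 0}$: a generator $p_!(\z_P)(r)[2r-q]$ with $p:P\to Y$ regular pulls back, via the exchange isomorphism $f^*p_!\cong p'_!g'{}^*$ (Proposition \ref{pcisdeg}(\ref{iexch}), available since $p$ can be taken smooth projective or since $f$ is being restricted appropriately — here one reduces to the smooth-projective-plus-open-immersion factorization), to $p'_!(\z_{P'})(r)[2r-q]$ with $P'=P\times_Y X$; one needs $P'$ regular, which holds when $X,Y$ are regular and $p$ is smooth (and one handles the closed-immersion part of the factorization separately, or invokes the stratification argument of Part IV). Then $f_*$ is right weight-exact by adjunction. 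Part II.2 (smooth $f$): here $f^*$ is left weight-exact by II.1, and $f^!\cong f^*(s)[2s]$ by Proposition \ref{pcisdeg}(\ref{ipur}) combined with Part I, so both are left weight-exact; right weight-exactness of $f^*$ follows because $f^*=f^!(-s)[-2s]$ and $f^!$ is right weight-exact by II.1. Part II.3 (pro-smooth $f$) is then obtained by the continuity isomorphism Proposition \ref{pcisdeg}(\ref{icont}), writing $C'$-generators as pullbacks from a finite stage.

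Parts III and IV are the gluing statements. For III, I would apply Proposition \ref{pbw}(\ref{iloc}) and (\ref{igluws}--\ref{igluwsn}) to the gluing data $\dm(Z)\stackrel{i_*}\to\dm(X)\stackrel{j^*}\to\dm(U)$ coming from Proposition \ref{pcisdeg}(\ref{iglu}): one must check that $\wchow(X)$ restricts to a weight structure on $i_*\dm(Z)$ compatible with $\wchow(Z)$, and that the glued weight structure coincides with $\wchow(X)$ — this amounts to checking that $C'(X)$ and the union $j_!(C'(U))\cup i_*(C'(Z))$ generate the same big extension-closure, which uses that a quasi-projective regular-domain $p:P\to X$ can be cut by $Z$ into a part over $U$ and a part over $Z$ (via the localization triangle \eqref{eglu}) together with Lemma \ref{lega43}-type regularity. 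Statement III.2 is then exactly the description of $\cu_{w\ge 0}$, $\cu_{w\le 0}$ in Proposition \ref{pbw}(\ref{igluws}); III.1 follows from the "moreover" clause of Proposition \ref{pbw}(\ref{iloc}), noting $i_*$ has both adjoints. Part IV follows by induction on the number of strata, using the binary-tree structure of a very regular stratification: split $\al$ into $\beta$ on $U$ (open) and $\gamma$ on $Z$ (closed), apply III.2 and the inductive hypothesis to $j^*M,j^!M$ and $i^*M,i^!M$, exactly as in the proof of Lemma \ref{l4onepo}(1).

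Finally Part V. For V.1, any object of $\dmcx$ lies in the triangulated category generated by the $u_!\z_U(n)$ of Proposition \ref{pcisdeg}(\ref{icompgen}), and each such generator is in some $\dms_{\wchow\le m}$ (being, up to shift and twist, one of the $C'$-generators), so boundedness-above follows since $\dms_{\wchow\le 0}$ is extension-closed and shift-stable upward. V.2 (non-degeneracy from above): if $M\in\cap_l\dms_{\wchow\ge l}$ then $M$ is right-orthogonal to $C'(S)[-1]$ after every shift, i.e. $\cu(C'(S)[j],M)=0$ for all $j$; since $C'(S)$ contains a weakly generating family (the compact generators of Proposition \ref{pcisdeg}(\ref{icompgen}), up to twist) this forces $M=0$. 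V.3, $\z_S\in\dms_{\wchow\le 0}$: take $p=\id_S$, $r=0$, $q=0$ in Definition \ref{dwchow} — then $\z_S=p_!(\z_S)\in C'(S)\subset\dms_{\wchow\le 0}$ (this is immediate since $S$ is regular, so $\id_S$ has regular domain). V.4, $\z_S\in\dms_{\wchow=0}$ for pro-smooth $S$: by V.3 it remains to show $\z_S\in\dms_{\wchow\ge 0}$, i.e. $C'(S)\perp\z_S[1]$, i.e. $\dms(p_!(\z_P)(r)[2r-q],\z_S[1])=0$ for all quasi-projective regular-domain $p:P\to S$, $r\in\z$, $q\ge 0$ — equivalently (twisting) $\dms(p_!(\z_P)[-r']( -q'),\z_S)=0$ with $r'=1$, $q'\ge 0$... more precisely one rewrites the condition as vanishing when $r>2q$ in the notation of Lemma \ref{l4onepo}(2); but $P$ here is merely regular and quasi-projective over the pro-smooth $S$, which is exactly the hypothesis of that lemma (a regular scheme quasi-projective over a pro-smooth scheme), so the lemma applies directly. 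I expect the main obstacle to be Part III — specifically verifying that the big extension-closure of $C'(X)$ agrees with the one glued from $C'(U)$ and $C'(Z)$, since this requires controlling how quasi-projective-regular-domain morphisms to $X$ interact with the decomposition $X=U\sqcup Z$ (the domains of the restricted morphisms need not remain regular without care, and one must feed in the regularity-preservation results around Lemma \ref{lega43} and the very-regular stratification hypothesis).
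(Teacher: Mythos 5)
Your overall strategy coincides with the paper's: Part I via twist-stability of $C'$, Part II.1 via $f_!(C'(X))\subset C'(Y)$ plus Lemma \ref{lbes}(\ref{isesfun}) and adjunction, II.2 via relative purity and Part I, II.3 via continuity, IV by induction from III.2, and V from Proposition \ref{pcisdeg}(\ref{icompgen}) and Lemma \ref{l4onepo}(2). The one place where you diverge --- and which you yourself flag as the ``main obstacle'', leaving it unresolved --- is the identification of $\wchow(X)$ with the weight structure glued from $\wchow(Z)$ and $\wchow(U)$ in Part III. You propose to compare the big extension-closure of $C'(X)$ with the one generated by $j_!(C'(U))\cup i_*(C'(Z))$; this comparison is unnecessary. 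The paper instead invokes the uniqueness assertion of Proposition \ref{pbw}(\ref{igluwsn}): the glued weight structure is the \emph{only} one for which $i_!,j_!,j^*,i^*$ are all left weight-exact, and all four of these left weight-exactness statements are already available from Part II (using that $Z$ and $X$, hence also $U$, are regular). So the identification is immediate, and III.2 is then read off from the explicit description of $\cu_{w\ge 0}$ and $\cu_{w\le 0}$ in Proposition \ref{pbw}(\ref{igluws}); no analysis of generators is needed.

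Two smaller points. In the closed-immersion sub-case of II.1 (left weight-exactness of $f^*$ for $f$ a closed immersion of regular schemes), the device the paper uses is not Lemma \ref{lega43} but a very regular stratification of the \emph{domain} $P$ of a generator $p:P\to Y$, chosen so that each stratum maps either into the closed subscheme or into its open complement; Lemma \ref{l4onepo}(1) then decomposes $p_!\z_P$ into the pieces $p_!j^{\al}_{l!}\z_{P^l_\al}$, each of which pulls back either to $0$ or to an element of $C'$ of the closed subscheme. This is exactly the difficulty you correctly identify (regularity of the fibre product over the closed subscheme can fail), and stratifying $P$ is how one circumvents it. Finally, in V.4 you apply Lemma \ref{l4onepo}(2) to a merely regular quasi-projective $P\to S$, whereas that lemma is stated for \emph{smooth} quasi-projective morphisms; the paper avoids this by first using II.2--3 and Proposition \ref{pcisdeg}(\ref{iupstar}) to reduce to $S=\spe K$ with $K$ perfect, where regular quasi-projective domains are automatically smooth. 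Your shortcut is repairable (over a pro-smooth base a regular quasi-projective scheme is itself pro-smooth by Lemma \ref{lega43}, which is what the proof of Lemma \ref{l4onepo}(2) actually exploits), but as written you quote a hypothesis the lemma does not have.
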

\begin{proof}

I By Proposition \ref{pbw}(\ref{iadj}) it suffices to verify that  all $-(n)[2n]$ are left weight-exact. The latter is immediate from the definition of $\dms_{\wchow\le 0}$ by Lemma \ref{lbes}(\ref{isesfun}).

%definition (note that )

 %The assertion follows immediately.

II1. The statement for $f_!$ is obvious (here we apply  Lemma \ref{lbes}(\ref{isesfun} again). Next, Proposition \ref{pbw}(\ref{iadj}) yields the statement for $f^!$.

In order to verify the remaining parts of the assertion it suffices to consider the case when $f$ is either smooth, or is a closed embedding of regular schemes.

If $f$ is smooth then Proposition \ref{pcisdeg}(\ref{iexch}) (together with Lemma \ref{lbes}(\ref{isesfun}) yields that $f^*(\dm(Y)_{\wchow\le 0})\subset \dm(X)_{\wchow\le 0}$. Applying Proposition \ref{pbw}(\ref{iadj}) we also obtain the right weight-exactness of $f_*$.

By loc. cit. we obtain: it remains to verify the left weight-exactness of $f^*$ when $f$ is a closed embedding. Denote by $j:U\to Y$ the open embedding complimentary to $f$.
It suffices to check (after making obvious reductions) that for $M=p_!(\z_P)$ where  $P:P\to Y$ is  smooth quasi-projective,  we have:
$f^*M$ can be obtained from objects of the form $q_{i!}(\z_{Q_i})$ for some quasi-projective $q_i:Q_i\to X$, $Q_i$ are regular,
by 'extensions' (cf. Lemma \ref{lbes}(\ref{iseses}).
Now, choose a very regular stratification $\al$ of $P$ each of whose components is mapped by $p$ either to $X$ or to $U$. %=Y\setminus X$.
By Lemma \ref{l4onepo}(1), it suffices to verify that $i^*p_!j_{l!}^\al(\z_{P^l_\al})\in C'(X)$. Now, if $p\circ j_l$ factorizes through $U$, then $f^*p_!j_{l!}^\al(\z_P)$ factorizes through $f^*j_!=0$ (see Proposition \ref{pcisdeg}(\ref{iglup})). On the other hand, if  $p\circ j_l=f\circ p'$ (for some $p':P^l_\al\to X$) then $f^*p_!j_{l!}^\al(\z_{P^l_\al})\cong f^*f_!p'_!%^\ast
(\z_{P^l_\al})\cong %f'{}^\al
j_{l!}(\z_{P^l_\al})\in C'(X)$.

2. Obviously, %?!
we can assume that $f$ is equi-dimensional of relative dimension $s$. Then
$f^!(-)\cong f^*(-)(s)[2s]$ by Proposition \ref{pcisdeg}(\ref{ipur}).  Hence both the left and the right hand side
are weight-exact by the combination of the previous assertions of our theorem (note also that we actually verified the left weight-exactness of $f^*$ for an arbitrary smooth $f$ in the proof of the previous assertion).

3. Passing to the limit (using %Proposition \ref{pcisdeg}(\ref{icont})
Remark \ref{ridmot}) we prove %assertion II3.
the left weight-exactness of $f^*$. %again we apply??!! Next??!! 

In order to verify its right weight-exactness (by the definition of $\wchow$) for $X=\prli_\be X_\be$ ($\be\in B$), $Y_0=X$,
$p_{\be}:X_\be \to X$, and $p_{\be,0}: X_\be \to X_0$, we should check: for any $O\in C'(X)$,
$N\in \dm(Y)_{\wchow\ge 1}$,  we have 
$f^*M\perp O$. Since all the $p_{\be,0}^*$ are (right) weight-exact, we can replace $X$ by any $X_\be$ in this statement; hence we may assume that $O=f^*N$ for some $N\in C'(Y)$ (by Lemma \ref{lega43}).
 %regular immersion; {ega419}??!!
Then it remains to %apply
combine the previous assertion with 
Proposition \ref{pcisdeg}(\ref{icontp}).

III Since $i_*\cong i_!$ in this case, $i_*$ is weight-exact by assertion II1. $j^*$ is weight-exact by assertion II2.

1. $\dm(U)$ is the localization of $\dm(X)$ by $i_*(\dm(Z))$
by  Proposition \ref{pcisdeg}(\ref{iglu}). Hence   Proposition \ref{pbw}(\ref{iloc}) yields the result (see Remark \ref{rlift}).

2.  Proposition \ref{pcisdeg}(\ref{iglu}) %along with the weight-exactness of $i_*$ and $j^*$
yields: $\wchow(X)$ is exactly the weight structure obtained by 'gluing $\wchow(Z)$ with $\wchow(U)$' via   Proposition \ref{pbw}(\ref{igluws}) (here we use part \ref{igluwsn} of loc. cit.). Hence loc. cit. yields the result (note that $j^*=j^!$). %write this in {pbw}??

%{red}

IV The assertion can be easily proved by induction on the number of strata  using assertion III2.

%In order to verify assertion V1 we choose a stratification $S=\cup S_\al$ such that %the reductions of all $S^\al_{l}$ are regular. Since we have $j_l^*(\z_S)=\z_{S_l}\in \dmc(S_l)_{\wchow\ge 0}$ (by assertion V2), assertion IV implies the result.

V1. Immediate from Proposition \ref{pcisdeg}(\ref{icompgen}). 

2. Loc.cit. also yields that for any
$N\in \obj \dms$ there exists a non-zero morphism in $\dms(O[i],N)$ for some $i\in \z$, $O\in C'(S)$. This is equivalent to assertion V2 (by the definition of $\wchow$).

3.  We have $\z_S\in C'(S)$; hence $\z_S\in \dms_{\wchow\le 0}$. % (see assertion I1 of our theorem).

4. Let $S\cong \prli S_\be$ (as in the definition of pro-smooth schemes). %be pro-smooth. 
Assertion II3 (along with Proposition \ref{pcisdeg}(\ref{iupstar})) yields: it suffices to verify the statement in question for one of $S_\be$. Hence we may assume that $S$ is a smooth quasi-projective variety over a perfect field $K$. Moreover, by assertion II2 it suffices to consider the case $S=\spe K$. %{ivoemot}??!! No {l4 ??!! 
In this case the statement is immediate from Lemma \ref{l4onepo}(2).

%Since it is regular, we should only verify that $\z_S\in  \dms_{\wchow\ge 0}$. By the definition of $\dms_{\wchow\ge 0}$, it suffices to verify  that $M\perp \z_S[1]$ for any $M\in C'(S)$. The latter is immediate from Lemma \ref{l4onepo}(2). 

%??!!

\end{proof}

\begin{rema}\label{rexplwd}

1. We do not know whether a (general) compact motif always possesses a 'compact weight decomposition', though this is always true for motives with rational coefficients (by the central results of \cite{hebpo} and \cite{brelmot}; cf. \S\ref{srat} below).
This makes the search of 'explicit' weight decompositions (even more) important. Note that the latter allow the calculation of {\it weight filtrations} and {\it weight spectral sequences} for cohomology; see Proposition \ref{pwss} below.
The problem here is that our results do not provide us with 'enough' compact objects of $\dms_{\wchow\ge 0}$; they 
do not yield
any   non-$\q$-linear objects of $\dms_{\wchow\ge 0}$ (see \S\ref{srat})   at all unless $S$ is a scheme over a field.

2. Still we describe an important case when an explicit weight decomposition is known.

Adopt the notation and assumptions  of Proposition \ref{pcisdeg}(\ref{iglu}). Let $p_U:P_U\to U$ be a projective morphism such that $P_U$ is pro-smooth; denote $p_{U!}\z_{P_U}$ by $N$. Then we have $ j_!(N[1])\in \dmx_{\wchow}\le 1$; hence $w_{\le 0}j_!(N[1])\in \dmx_{\wchow=0}$.

Now assume that $P_U$ possesses a pro-smooth $X$-model i.e. that $P_U=P\times_X U$ for a projective $p:P\to X$, $P$ is pro-smooth. Then for $M=p_!\z_P$ we have: $N=j^*(M)(=j^!(M))$.  Since  $M\in \dmx_{\wchow=0}$, 
 %$i_*i^*(M) \in  \dmx_{\wchow\le 0}$. Therefore 
 the distinguished triangle $$ i_*i^*(M)\to j_!(N)[1](=j_!j^!(M)[1]) \to M[1]$$ (given by Proposition \ref{pcisdeg}(\ref{iglu})) yields a $\wchow(X)$-weight decomposition of
$j_!(N)[1]$ (note that both of its components are compact). 
Hence %for any (cohomological functor) $H$ we obtain: in this case $W^0H(j_!N[1])$ 
%the weight filtration of the cohomology of $P_U$ relates it with the cohomology of $P$ if the latter exists; %we also obtain a certain functorial extension of this  
weight decompositions relate $P_U$ with $P$ when the latter exists;
still they exist and are 'weakly functorial' (see Proposition \ref{pbw}(\ref{iwefun})) in  $N$ in the general case also. 
%this filtration only depends on $P_U$ and is defined even if $P$ does not exist.
 Note that such an $X$-model %$P$ 
for $P_U$ always exists if $X$ is a variety over a characteristic $0$ field (by Hironaka's resolution of singularities); hence our methods yield a certain substitute for the resolution of singularities in a more general situation. This can be applied to the study of motivic cohomology of $P_U$ with integral coefficients via weight filtration (see Proposition \ref{pwss}(II1) below) for the %motivic 
cohomology of $j_!N$; see also Proposition 3.3.3 of \cite{brelmot} for more detail (in the setting of motives with rational coefficients).

3. If $S$ is a variety over a field (or a pro-smooth scheme) %??!
there is a subcategory of $\dms$ (that also lies in $\dmcs$) such that our $\wchow(S)$ restricts to a 'very explicit' weight structure for it. This is the category of 'smooth motives' considered in \cite{lesm}; by Corollary 6.14 of ibid. it equals the subcategory of $\dms$ generated by 'homological motives' of smooth projective $P/S$ (and the heart of this restriction is given by retracts of %motives of these $P$)
these $M_S(P)$.
One can also restrict $\wchow$ to the subcategory of Tate motives inside these smooth ones; see Corollary 6.16 of ibid.

\end{rema}

Now we prove for a regular $S$ that positivity %and negativity 
of objects of $\dms$ (with respect to $\wchow$) 
can be 'checked at points'. Also, %this is (one half of) a motivic analogue of  \S5.1.8 of \cite{bbd}.
 'weights of compact objects are lower semi-continuous'.

\begin{pr}\label{ppoints}

%?! A subscheme of a regular $S'$?!?

Let  $M\in \obj \dms$. Denote by  $\sss$   the set of %(the spectra of) the residue fields of 
(Zariski) points of $S$. For a $K\in \sss$ we will denote the corresponding morphism $K\to S$ by $j_K$.

1. Let $S$ be regular. Then $M\in \dms_{\wchow\ge 0}$ %(resp. $M\in \dms_{\wchow\le 0}$) 
if and only if for any $K\in \sss$ we have $j_K^!(M)\in \dm(K)_{\wchow\ge 0}$; %(resp. $j_K^*(M)\in \dm(K)_{\wchow\le 0}$); see Remark \ref{ridmot}.

2. Let $K$ be a generic point of  $S$, $M\in \obj\dmcs$. %$M\in \obj\dms$???!!!
 Suppose that 
 $j_K^*(M)\in \dmk_{\wchow\le 0}$ %(resp.  $j_K^*(X)\in\dmk_{\wchow\le 0}$). 
 Then there exists an open immersion $j:U\to S$, $K\in U$, such that $j^*(M)\in \dm(U)_{\wchow\le 0}$. % (resp. $j^*(X)\in \dm(U)_{\wchow\le 0}$).

\end{pr}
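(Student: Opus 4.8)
The plan is to reduce both assertions to results already established in Theorem \ref{twchowa} (in particular the point-wise criterion of part IV and the stratification machinery) together with the continuity statement of Proposition \ref{pcisdeg}(\ref{icont}, \ref{icontp}) and the limit lemma \ref{lega43}. For assertion 1, the "only if" direction is immediate: each $j_K$ is a pro-open immersion (a point of $S$ is the limit of its open neighbourhoods, with dominant connecting morphisms), so $j_K^!=j_K^*$ and this functor is weight-exact by Theorem \ref{twchowa}(II3); hence $j_K^!(M)\in\dm(K)_{\wchow\ge 0}$ whenever $M\in\dms_{\wchow\ge 0}$. For the converse, I would argue by Noetherian induction on closed subschemes of $S$. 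Pick a generic point $K$ of $S$; by assumption $j_K^!(M)\in\dm(K)_{\wchow\ge 0}$, and since $\dmk_{\wchow\ge 0}=C'(K)^\perp[-1]$ is detected by the compact generators in $C'(K)$, a continuity argument (Proposition \ref{pcisdeg}(\ref{icontp}), exactly as in the proof of assertion 2 below) lets me spread this positivity out to an open neighbourhood $U$ of $K$: that is, $j_U^!(M)=j_U^*(M)\in\dm(U)_{\wchow\ge 0}$ for some open $j_U:U\to S$. Shrinking $U$ if necessary I may assume $U$ is regular. Let $i:Z\to S$ be the complementary closed immersion (with $Z$ — after a further stratification — regular), so that $(U,Z)$ is a very regular stratification of $S$ into two strata. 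By induction, $i^!(M)$ satisfies the point-wise criterion on $Z$, hence $i^!(M)\in\dm(Z)_{\wchow\ge 0}$; combined with $j_U^!(M)\in\dm(U)_{\wchow\ge 0}$, Theorem \ref{twchowa}(IV) (or III2) gives $M\in\dms_{\wchow\ge 0}$.

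For assertion 2, I would work directly with the description $\dmk_{\wchow\le 0}=$ big extension-closure of $C'(K)$. Writing $K=\prli_\be U_\be$ as the filtered limit of the open neighbourhoods $U_\be$ of $K$ in $S$ (with dominant, affine connecting morphisms), Proposition \ref{pcisdeg}(\ref{icont}) identifies $\dmck$ with the $2$-colimit of the $\dmc(U_\be)$, the connecting functors being the (weight-exact) restriction functors $j_{U_{\be'},U_\be}^*$. Since $M$ is compact, the object $j_K^*(M)\in\dmck$ comes from some $M_{\be_0}\in\dmc(U_{\be_0})$, i.e. $j_K^*(M)\cong j_{U_{\be_0}}^*(M_{\be_0})$; moreover by Proposition \ref{pcisdeg}(\ref{icontp}) the isomorphism $j_K^*(M)\cong j_{U_{\be_0}}^*(M_{\be_0})$ itself is the restriction of an isomorphism $j_{U_{\be_1},U_{\be_0}}^*(M)\cong j_{U_{\be_1},U_{\be_0}}^*(M_{\be_0})$ for some $\be_1\ge\be_0$ — so after renaming I may assume $M$ itself extends the given $j_K^*(M)$ over a neighbourhood. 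The real content is then: $j_K^*(M)\in\dmk_{\wchow\le 0}$ must descend to $j^*(M)\in\dm(U)_{\wchow\le 0}$ for some smaller open $U\ni K$. Here I would use that $M$ is compact and that $\dmk_{\wchow\le 0}$ admits the characterization $C'(K)^{\perp}[-1]=\dmk_{\wchow\ge 1}$ via orthogonality: an object $N\in\dmck$ lies in $\dmk_{\wchow\le 0}$ iff $N\perp\dmk_{\wchow\ge 1}$, and by V1 (bounded-above-ness) and V2 (non-degeneracy from above) of Theorem \ref{twchowa} this orthogonality can be tested against the compact generators $O\in C'(K)$ — or rather, it suffices to test that the cone of a weight truncation $w_{\ge 1}(j_K^*(M))\to j_K^*(M)$ vanishes, which by compactness and Lemma \ref{lega43} (every quasi-projective regular $K$-scheme, and the projective/smooth/étale data defining the relevant element of $C'$, already lives over some $U_\be$) is a statement about finitely much data, all of which is defined over some $U_\be$. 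Pulling the vanishing of the relevant $\homm$-groups (or of a cone) back along continuity once more, it holds already over a suitable open $U=U_\be\ni K$, which gives $j^*(M)\in\dm(U)_{\wchow\le 0}$.

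I expect the main obstacle to be the "spreading out" step — turning a statement valid at the generic point $K$ (membership in $\dmk_{\wchow\ge 0}$, resp. $\dmk_{\wchow\le 0}$) into the same statement over a genuine open neighbourhood. The subtlety is that $\dmk_{\wchow\ge 0}$ and $\dmk_{\wchow\le 0}$ are defined by orthogonality/extension-closure against a possibly infinite set of generators, whereas continuity (Proposition \ref{pcisdeg}(\ref{icont}, \ref{icontp})) only directly controls finitely presented data. For assertion 1 this is handled cleanly because $\dmk_{\wchow\ge 0}=C'(K)^\perp[-1]$ and, when $M$ is not compact, one must instead argue via a weight decomposition of $M$ itself (using Theorem \ref{twchowa}(V1) over the regular base to get a bounded-above complex, whose finitely many "cells" are defined over an open) — this is where a little care is needed and where I would lean on Lemma \ref{lega43} to ensure the cells, being of the form $p_!\z_P$ with $P$ regular quasi-projective, descend. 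For assertion 2 the compactness of $M$ is what saves the day: it collapses the infinite data into finite data via the $2$-colimit description, so the continuity argument applies essentially verbatim as in the proof of Theorem \ref{twchowa}(II3). In both cases, once the spreading-out is in place, the remaining steps are formal manipulations with weight-exact gluing functors already proved weight-exact in Theorem \ref{twchowa}.
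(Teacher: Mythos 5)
Your reduction of assertion 1 to a ``spreading out'' step is where the argument breaks: you claim that $j_K^!(M)\in\dm(K)_{\wchow\ge 0}$ at the generic point $K$ can be propagated via Proposition \ref{pcisdeg}(\ref{icontp}) to $j_U^*(M)\in\dm(U)_{\wchow\ge 0}$ for some open $U\ni K$. But membership in $\dm(U)_{\wchow\ge 0}=C'(U)^\perp[-1]$ is the simultaneous vanishing of infinitely many Hom groups (one for each $N\in C'(U)$, most of which are not pulled back from $S$), whereas continuity only identifies each Hom group at the limit with the colimit of the groups at finite levels: a single element dies at some finite level, but a group vanishing at the limit need not vanish at any finite level, and even if each did, no single $U$ would work for all $N$. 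This is exactly the difficulty the paper's proof is built to avoid: it fixes one test object $N\in C'(S)[-1]$ and one morphism $g\in\dms(N,M)$, kills $j^*(g)$ over some open $U$ by the generic-point orthogonality plus continuity (one element of one Hom group out of a compact object), then chooses a very regular stratification with $U$ as a stratum and uses Lemma \ref{l4onepo}(1) together with Noetherian induction --- which gives vanishing of the \emph{full} groups $\dm(S_l^\al)(j_l^*N,j_l^!M)$ for the strata in the complement of $U$ --- to force $g=0$. Without some such device your induction cannot get started. A smaller slip in the easy direction: for a non-generic point $K$ the morphism $j_K$ is not a pro-open immersion (it is a pro-open immersion into $\overline{\{K\}}$ followed by a closed immersion), so you need Theorem \ref{twchowa}(II1) for the closed immersion in addition to II3, which is how the paper argues.

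For assertion 2 your underlying idea --- reduce to the vanishing of a single morphism attached to a weight truncation and spread that vanishing out by continuity using compactness of $M$ --- is the right one and matches the paper. But you should take the weight decomposition $B\to M\to A$ over $S$ itself and spread out the vanishing of $j_K^*(M\to A)$ (legitimate by Proposition \ref{pcisdeg}(\ref{icontp}) since the source $M$ is compact, even though $A$ need not be), rather than forming $w_{\ge 1}(j_K^*M)$ over $K$ and trying to descend it: that truncation is produced by the infinite construction of Proposition \ref{pnews}, is not compact, and is not ``finitely much data''. Also, the criterion is that the morphism $M\to w_{\ge 1}M$ vanishes, so that $M$ becomes a retract of the $\wchow\le 0$ component, not that its cone vanishes.
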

\begin{proof}

1. Combining parts II1 and II3 of  Theorem \ref{twchowa} we obtain: if $M\in \dms_{\wchow\ge 0}$  then $j_K^!(M)\in \dm(K)_{\wchow\ge 0}$ for any $K\in \sss$.

Now we prove the converse implication.
 We prove it via certain 
 noetherian induction: we suppose that our assumption is true for motives over any %closed %??!! subscheme of $S$ that is not dense in it. only for compact motives??!!
 regular subscheme of $S$ that is not dense in it.
 
 Let $M\in \obj\dms$ satisfy  $j_K^!(M)\in \dm(K)_{\wchow\ge 0}$  for any  $K\in \sss$. We should check that $N\perp M$ for any $N\in C'(S)[-1]$. So, we choose some $g\in \dms(N,M)$ and prove that $g=0$.
 
We choose a generic point $K$ of $S$. By part II3 of  Theorem \ref{twchowa} we have $j_K^*(N)\in \dm(K)_{\wchow\le -1}$; since $j_K^*=j_K^!$ we also have $j_K^*(N)\in \dm(K)_{\wchow\ge 0}$. Hence $j_{K}^*(g)=0$. By Proposition \ref{pcisdeg}(\ref{icontp})  there exists an open immersion $j:U\to S$ ($K\in U$)
such that $j^*(g)=0$. We choose a very regular stratification $\al$ of $S$ such that $U=S_{l_0}^{\al}$ is one of its components. By Lemma \ref{l4onepo}(1) it suffices to verify that $j_{l!}j_l^*(N)\perp M$ for any $l\in L$.
Now, we have $\dms(j_{l!}j_l^*(N),M)\cong \dm(S_{l}^{\al})(j_l^*N,j_l^!M)$. By the inductive assumption we have 
$\dm(S_{l}^{\al})(j_l^*N,j_l^!M)=\ns$ for any $l\neq l_0$ (since $N\in \dm(S_{l}^{\al})_{\wchow\le -1}$), whereas $\dm(S_{l}^{\al})(j_l^*N,j_l^!M)=\dm(S_{l}^{\al})(j_l^*N,j_l^*M)=\ns$.

2. We consider a weight decomposition of $M$: $B\to M%\stackrel{f}
\stackrel{g}{\to} A\to B[1]{\to} M[1]$.
We obtain that $j^*_K(g)=0$ (since $j^*_K(M)\perp \dmk_{\wchow\ge 0}[1]$). Again,  by Proposition \ref{pcisdeg}(\ref{icontp}) we obtain:  there exists an open immersion $j:U\to S$ ($K\in U$)
such that $j^*(g)=0$. Hence %there exist an $f'\in \dm(U')(j'(B),j'(A))$ such that $f'\circ f= \id_{j'A}$. 
$j^*(M)$ is a retract of $j^*(B)$. Since $j^*(B)\in \dm(U)_{\wchow\le 0}$ %(see Theorem \ref{tfunctwchow}(II2)), 
and $\dm(U)_{\wchow\ge 0}$ is Karoubi-closed in $\dm(U)$, we obtain the result.

\end{proof}

\subsection{%Variations: motives over varieties; %pro-smooth schemes??!
 Comparison with %motives with rational coefficients}
 weights for Beilinson motives}\label{srat}

In \cite{degcis} certain categories of $\dmlas$ were constructed and studied for any coefficient ring $\Lambda
\subset \q$ ($1\in \Lambda$; actually any commutative ring with a unit is possible). In this paragraph we will consider only the categories of the type $\dmqs$ (along with $\dms$) since their properties are better understood. Yet note: it also makes some sense to invert only the positive residue field characteristics of the corresponding base fields (in $\Lambda$); see the second assertion in Proposition 11.1.5 of ibid and \cite{bzp}.

The properties of $\dmcqs\subset \dmqs$ were stated in  (part C of) the Introduction of ibid.; see also \S2 of \cite{hebpo} and \S1.1 of  \cite{brelmot}. Here we only note that all the results on $\dm(-)$ that were stated and proved above also hold for motives with rational coefficients. Besides, for any finite type $f$ all the corresponding motivic image functors (i.e. $f^*$, $f_*$, $f_!$, and $f^!$) respect compactness of motives. Moreover, the analogue of Lemma \ref{l4onepo}(2) holds for an any regular $S$ (i.e. pro-smoothness is not needed); Proposition \ref{pcisdeg}(\ref{iglu}) holds for arbitrary closed embeddings. This allowed to construct in \S2.3 of \cite{brelmot} a certain bounded Chow weight structure for $\dmcqs$ that extends to $\dmqs$. % (under  mild extra restrictions on $S$ this weight structure was also constructed and studied in  detail in \cite{hebpo}). 
Combining Proposition 2.3.4(I2) and Proposition 2.3.5 of ibid. one can easily prove that this weight structure can be described similarly to Definition \ref{dwchow} above (this was our motivation for giving such a definition). Note also: in the case when $S$ is {\it reasonable} (i.e. if there exists an excellent %(noetherian) 
separated
scheme $S_0$ of dimension lesser than or equal to $2$ 
such that $S$ is 
of finite type over $S_0$) then this weight structure also has another more simple description  (in terms of certain Chow motives over $S$ that yield its heart) given by Theorem 3.3 of \cite{hebpo} and Theorem 2.1.1 of \cite{brelmot}. So, we know much more on weights for motives with rational coefficients than for the ones with integral ones.

Yet the results of the current can be somewhat useful for the study of motives and cohomology with integral coefficients. We note that we have natural comparison functors $\dm(-)\otimes \q\to \dmq(-)$; they commute with all functors of the type $f^*$, and are isomorphisms (and commute with 'everything else') when restricted to regular base schemes (see Proposition 11.1.5 of \cite{degcis}).  It easily follows that for any regular $S$ the comparison isomorphism $\dm(S)\otimes \q\to \dmq(S)$ is weight-exact (here we take $((\dm(S)\otimes \q)_{\wchow_\le 0}, (\dm(S)\otimes \q)_{\wchow_\ge 0})$ being the images of $(\dm(S)_{\wchow_\le 0}, \dm(S)_{\wchow_\ge 0})$; we do not need Karoubizations). 
We obtain (in particular) that the weight filtrations and weight spectral sequences (see the next paragraph) for any (co)homology theory with rational coefficients defined via our 'current' $\wchow$ coincide with the ones defined using the 'rational version' (whereas the latter is certainly easier to calculate).

%{ipura} + consequences??!!

%Apply the adjoint functor to  objects of $\dmsq$ in order to obtain $w$-positive objects??!

\subsection{Applications %and remarks??!! %to (co)homology of motives and other matters
}\label{sappl}

First we recall that the embedding $\hwchows\to K(\hwchows)$ factorizes %(up to an equivalence)
 through a certain exact {\it weight complex} functor
$t_S:\dms\to K(\hwchows)$ (similarly to 
 Proposition 5.3.3 of \cite{bws}, this follows from the existence of the Chow weight structure for $\dms$ along with the fact that it admits a differential graded enhancement; the latter property of $\dms$ can be easily verified since it 
 is defined in terms of certain  derived categories of sheaves over $S$).
%is a 'topological' triangulated category with rational coefficients. %{schwede}???!!

%\begin{rema}\label{rwc} The 'first ancestor' of our weight complex functor  was defined by Gillet and Soul\'e in \cite{gs}. Weight complex for a general triangulated category $\cu$ endowed with a weight structure was defined  in \cite{bws}. Even in the case when $\cu$ does not admit a differential graded enhancement, one can still define a certain 'weak' version of the weight complex; see \S3 of ibid. (and this version does not depend on any choices). It follows that  for $M\in \obj \dms$ the isomorphism class of $t_S(M)$ (in $K^b(\hwchows)$) does not depend on any choices (see ibid.). \end{rema}

Now we discuss (Chow)-weight spectral sequences and %the corresponding
 filtrations for homology and cohomology of motives.  We note that any weight structure yields certain weight spectral sequences for any (co)homology theory. %??!!; the main distinction of the result below from the general case (i.e. from Theorems 2.3.2 and 2.4.2 of ibid.) is that $T(H,M)$ always converges  (since  $\wchow$ is bounded). Since below we will be mostly interested in weight filtrations for cohomological functors,  we will define them in this  situation only; certainly, dualization is absolutely no problem (cf. \S2.1 of ibid.)

\begin{pr}\label{pwss}
Let $\au$ be an abelian category.

I Let $H:\dms\to \au$ be a homological functor; for any $r\in \z$ denote $H\circ [r]$ by $H_r$.

For an $M\in \obj\dms$ we denote by $(M^i)$ the terms of $t(M)$ (so $M^i\in \obj \hwchows$; here we can take any possible choice of $t(M)$). % as an object of $C^b(\hwchows)$).

  %, $M\in \obj \dmgep$,

Then the following statements are valid.

1. There exists a ({\it Chow-weight})  spectral sequence $T=T(H,M)$ with $E_1^{pq}=
H_q(M^p)$; the differentials for $E_1T(H,M)$ come from $t(M)$. It converges to $H_{p+q}(M)$ if $M$ is bounded.

2. $T(H,M)$ is $\dms$-functorial in $M$ (and does not depend on any choices) starting from $E_2$.

II1. Let $H:\dms\to \au$ be any contravariant functor.
Then for any $m\in \z$ the object $(W^{m}H)(M)=\imm (H(\wchow_{\ge m}M)\to H(M))$
does not depend on the choice of $\wchow_{\ge m}M$; it is functorial in $M$.

We call the filtration of $H(M)$ by $(W^{m}H)(M)$ its {\it Chow-weight} filtration.

2. Let $H$ be cohomological.  For any $r\in \z$ denote $H\circ [-r]$ by $H^r$. %; the Chow-weight filtrations is defined as $W^mH(M)=\imm (H(\wchow_{\le m}M)\to H(M))$.

Then the natural dualization of assertion I is valid.
For any $M\in \obj \dms$ we have a spectral sequence 
with $E_1^{pq}=
H^{q}(M^{-p})$; it converges to $H^{p+q}(M)$ if $M$ is bounded. %%For virtual $\wchow$-truncations of cohomology??!!!
Moreover, in this case the step of filtration given by ($E_{\infty}^{l,m-l}:$ $l\ge k$)
 on $H^{m}(M)$ equals $(W^k H^{m})(M)$ (for any $k,m\in \z$). $T$ is functorial in $H$ and $M$ starting from $E_2$.

\end{pr}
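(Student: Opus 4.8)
The plan is to derive Proposition \ref{pwss} as a routine instance of the general weight-structure machinery of \cite{bws}, using the facts already established in the excerpt: that $\dms$ carries the weight structure $\wchow$, that its heart is $\hwchows$, and (from the remark preceding this proposition) that $\wchow$ admits a differential graded enhancement, so that the weight complex functor $t_S:\dms\to K(\hwchows)$ exists and is exact. Once these inputs are in place, every claim below is the translation of a statement of \cite{bws} into the homological convention of Remark \ref{rstws}(3), applied to $\cu=\dms$.

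First I would prove part I. Given a homological $H:\dms\to\au$, one forms the Postnikov tower of $M$ coming from a sequence of weight decompositions (the existence of such a tower is Proposition \ref{pbw}(\ref{iext},\ref{iwefun}) together with the standard argument of \cite{bws}, \S2.2), whose subquotients are shifts of the terms $M^i\in\obj\hwchows$ of the weight complex $t(M)$. Applying $H$ to this tower yields an exact couple, hence a spectral sequence $T(H,M)$ with $E_1^{pq}=H_q(M^p)$; the $E_1$-differentials are induced by the boundary maps of the tower, which by definition of $t_S$ are exactly the differentials of $t(M)$. Boundedness of $M$ means the tower is finite, so $T$ converges to $H_{p+q}(M)$. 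For the functoriality statement (assertion 2) the point is that the tower and $t(M)$ are only weakly functorial in $M$ (the weight decompositions are non-canonical), but by the argument of \cite{bws}, Theorem 2.4.2/2.5.4, any two choices of weight decomposition triangles are connected by morphisms that become homotopic after applying $H$, so the induced maps on $E_1$ agree from $E_2$ onward; this gives $\dms$-functoriality of $T(H,M)$ (and independence of choices) starting from $E_2$.

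Next I would treat part II. For assertion II1, given a contravariant (hence, after sign reversal, cohomological-type) $H$ and a choice of $w_{\ge m}M$ fitting into $w_{\le m-1}M\to M\to w_{\ge m}M\to w_{\le m-1}M[1]$, one sets $(W^mH)(M)=\imm(H(w_{\ge m}M)\to H(M))$. Independence of the choice of $w_{\ge m}M$ and functoriality in $M$ follow from the weak functoriality of weight decompositions (Proposition \ref{pbw}(\ref{iwefun})) exactly as in \cite{bws}, \S2.3–\S2.4: two choices of $w_{\ge m}M$ are related by a map inducing the identity on $M$, and a short diagram chase shows the images in $H(M)$ coincide; similarly a morphism $M\to M'$ lifts to a morphism of weight decomposition triangles, giving compatibility of the filtrations. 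For assertion II2 one dualizes part I: replacing $H$ by its values $H^r=H\circ[-r]$ and re-indexing, the Postnikov tower of $M$ produces a cohomological spectral sequence with $E_1^{pq}=H^q(M^{-p})$ converging to $H^{p+q}(M)$ for bounded $M$, and the standard identification of the abutment filtration of a weight spectral sequence with the weight filtration (\cite{bws}, Theorem 2.4.2) shows that $\{E_\infty^{l,m-l}:l\ge k\}$ is exactly $(W^kH^m)(M)$; the functoriality in $H$ and $M$ from $E_2$ on is inherited from part I.

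The genuine content here is entirely imported from \cite{bws}; the only thing specific to the present paper that must be checked is that $t_S$ exists, i.e. that $\wchow$ has a dg enhancement — and this is precisely the point flagged in the paragraph preceding the proposition, where it is observed that $\dms$ is built from derived categories of sheaves over $S$ and hence is visibly dg-enhanceable, the enhancement being inherited by $\dms$ and compatible with $\wchow$. So I expect the main (and really the only) obstacle to be making this dg-enhancement assertion precise enough to invoke Proposition 5.3.3 of \cite{bws} (or its homological analogue); everything downstream — the construction of the tower, the exact couple, the convergence, and the identification of the abutment filtration with the weight filtration — is then a mechanical transcription of the cited results, and I would simply refer to \cite{bws} for the details rather than reproduce them.
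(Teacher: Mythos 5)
Your proposal is correct and follows the same route as the paper: the paper's proof consists precisely of citing Theorem 2.3.2, Proposition 2.1.2(2), and Theorem 2.4.2 of \cite{bws}, i.e.\ it treats the proposition as an instance of the general weight-structure machinery once $\wchow$ (and, for the formulation in terms of $t(M)$, the weight complex functor) is in place. Your expanded sketch of the Postnikov-tower construction, the weak functoriality of weight decompositions, and the identification of the abutment filtration is exactly the content of those cited results, so nothing further is needed.
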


\begin{proof}
%I

I Immediate from Theorem 2.3.2 of ibid. %\cite{bws}.

II1. This is  Proposition 2.1.2(2) of ibid.

2. Immediate from Theorem 2.4.2 of ibid.

%II  Immediate from Theorem 2.3.2 of ibid.
\end{proof}

\begin{rema}\label{rintel}

1.  We obtain certain functorial {\it Chow-weight} spectral sequences and
filtrations for any (co)homology of motives. In particular, we have them
for  \'etale and motivic (co)homology of motives.
%For the latter theory, we obtain certain 
Note that these results that cannot be proved using 'classical' (i.e. Deligne's) methods, since the latter heavily rely on the degeneration of (an analogue of) $T$ at $E_2$. %??!! We will conclude this subsection %??!
 %by studying an example of this sort; we obtain a result that (most probably) could not be guessed using the 'mixed motivic intuition'.   

2. $T(H,M)$ can be naturally described in terms of the {virtual $t$-truncations} of $H$
(starting from $E_2$); see %\S2.5 of \cite{bws}. % and \S\ref{svirt} below.
\S2 of \cite{bger} and \S4.3 of \cite{brelmot}. %$\tcho$ for representable functors??!!

%3. Actually, $(W^{m}H)(M)=\imm (H(\wchow_{\ge m}M)\to H(M))$ does not depend on the choice of $\wchow_{\ge m}M$ and is functorial in $M$ for any contravariant $H:\dms\to \au$; see Proposition 2.1.2(2) of \cite{bws}.

\end{rema}

\end{document}